\documentclass[12pt]{article}
\usepackage{geometry}                
\geometry{letterpaper}                   
\usepackage{graphicx}
\usepackage{amssymb}
\usepackage{epstopdf}
\DeclareGraphicsRule{.tif}{png}{.png}{`convert #1 `dirname #1`/`basename #1 .tif`.png}

\oddsidemargin 0in \topmargin 0in \headheight 0in \headsep
0in \textheight 9in \textwidth 6.5in

\usepackage{graphicx}
\usepackage{amssymb}
\usepackage{epstopdf}
\usepackage{amsmath}
\usepackage{amsthm}
\usepackage{amsfonts}
\usepackage{epstopdf}

\DeclareGraphicsRule{.tif}{png}{.png}{`convert #1 `dirname
#1`/`basename #1 .tif`.png}

\newtheorem{theorem}{Theorem}
\newtheorem{lemma}[theorem]{Lemma}

\newtheorem{corollary}[theorem]{Corollary}

\newtheorem{proposition}[theorem]{Proposition}
\theoremstyle{definition}

\theoremstyle{remark}

\numberwithin{equation}{section}

\newcommand\RR{{\mathbb R}}

\newcommand\EE{{\mathbb E}}
\newcommand\PP{{\mathbb P}}

\def\W2{W^{1,2}({\cal O}(M))}

\newcommand{\Norm}[2]{\left| \left| #2 \right| \right|_{#1}}

\newcommand\eps{\varepsilon}

\title{Small deviations for beta ensembles}

\author{Michel Ledoux 
\and Brian Rider}

\begin{document}
\maketitle

\begin{abstract} 
We establish various small deviation inequalities for the extremal (soft edge)
eigenvalues in the $\beta$-Hermite and $\beta$-Laguerre ensembles.
In both settings, upper bounds on the variance of the largest
eigenvalue of the anticipated order follow immediately.
\end{abstract}

\section{Introduction}

In the context of their original discovery, the Tracy-Widom laws describe the fluctuations of the limiting largest
eigenvalues in the Gaussian Orthogonal, Unitary, and Symplectic Ensembles (G$\{$O/U/S$\}$E) \cite{TW1, TW2}.
These are random matrices of real, complex, or quaternion Gaussian entries, of mean zero and mean-square
one, independent save for the condition that the matrix is symmetric (GOE), Hermitian (GUE), or appropriately
self-dual (GSE). 
The corresponding Tracy-Widom distribution functions have shape
\begin{equation}
\label{TWShape}
   F_{TW}(t) \sim e^{\frac{1}{24}  \beta t^3 } \mbox{    as    } t \rightarrow - \infty,  \   \  1 - F_{TW}(t) \sim e^{- \frac{2}{3} \beta t^{3/2}} \mbox{   as   }  t 
   \rightarrow \infty,
\end{equation}  
where $\beta = 1$ in the case of GOE, $\beta = 2$ for GUE, and $\beta=4$ 
for GSE.

Since that time, it has become understood that
the three Tracy-Widom laws  arise in a wide range of models.
First, the assumption of Gaussian entries may be relaxed significantly, see \cite{Sosh}, \cite{TaoVu}
for instance.  Outside of random 
matrices, these laws also describe the fluctuations in the longest increasing subsequence of a random permutation \cite{BDJ},
the path weight in last passage percolation \cite{J},  and the current in simple exclusion \cite{J, TW3}, among others.

It is natural to inquire as to the rate of concentration of these various objects about the limiting Tracy-Widom laws.  Back in the 
random matrix setting, the limit theorem reads: with $\lambda_{\max}$ the largest eigenvalue in the $n \times n$ GOE, GUE or 
GSE, it is the 
normalized quantity $n^{1/6}(\lambda_{\max} - 2 \sqrt{n} )$ which converges to Tracy-Widom.  Thus,  one would optimally hope
for estimates of the form: 
$$
  \PP \Bigl( \lambda_{\max} - 2 \sqrt{n} \le - \eps \sqrt{n} \Bigr) \le C e^{- n^2 \eps^{3}/C},  \  \ 
  \PP \Bigl( \lambda_{\max} - 2 \sqrt{n} \ge  \eps \sqrt{n} \Bigr) \le C e^{- n \eps^{3/2}/C}, 
$$
for all $n \ge 1$, all $\eps \in (0,1]$ say, and $C$ a numerical constant.   Such are ``small deviation" inequalities, capturing
exactly the finite $n$ scaling and limit distribution shape (compare (\ref{TWShape})). 
Taking $\eps$ beyond $O(1)$ in the
above yields more typical large deviation behavior and different (Gaussian) tails (see below).

As discussed in \cite{L1, L2}, the right-tail inequality for the GUE (as well as for the
Laguerre Unitary Ensemble, again see below) may be shown to follow from results of Johansson \cite {J}
for a more general invariant model related to the geometric distribution that uses
large deviation asymptotics and sub-additivity arguments. 
The left-tail inequality for the geometric model of Johansson (and thus by some suitable
limiting procedure for the GUE and the Laguerre Unitary Ensemble) is established in
\cite {BDMLMZ} together with convergence of moments using delicate Riemann-Hilbert
methods. We refer to \cite{L2} for a discussion and the relevant references, as
well as for similar inequalities in the context of last passage percolation {\em{etc}}.
By the superposition-decimation procedure of \cite {FR}, the GUE bounds apply similarly
to the GOE (see also \cite {L3}).

Our purpose here is to present unified proofs of these bounds which apply to all
of the so-called beta ensembles.  These
are point-processes on $\RR$ defined by the $n$-level joint density: for any $\beta > 0$, 
\begin{equation}
\label{betadens}
 \PP(\lambda_1, \lambda_2, \dots, \lambda_n) =  \frac{1}{Z_{n,
     \beta}}  \prod_{j < k} |
\lambda_j - \lambda_k |^{\beta}  e^{ - (\beta/4)  \sum_{k=1}^n \lambda_k^2}.
\end{equation} 
At $\beta = 1, 2, 4$ this joint density is shared by the eigenvalues of G$\{$O/U/S$\}$E.  Furthermore, these three
values give rise to exactly solvable models.  Specifically, all finite dimensional correlation functions may be described
explicitly in terms of Hermite polynomials.  For this reason, the measure (\ref{betadens}) has come to be referred to the
$\beta$-Hermite ensemble; we will denote it by $H_{\beta}$.  Importantly, off of $\beta = 1,2, 4$,
despite considerable efforts (see \cite{For}, Chapter 13 for a comprehensive review), 
there appears to be no characterization of the correlation functions amenable to asymptotics. Still,
Ram\'irez-Rider-Vir{\'a}g \cite{RRV} have shown the existence of a general $\beta$ Tracy-Widom law, $TW_{\beta}$,
via the corresponding limit theorem:  with self-evident notation,
\begin{equation}
\label{HLimit}
     n^{1/6} \big ( \lambda_{\max}(H_{\beta}) - 2 \sqrt{n} \, \big ) \Rightarrow TW_{\beta}.
\end{equation}
This result makes essential use of a (tridiagonal) matrix model valid at all beta due to Dumitriu-Edelman \cite{DE}, and
proves the conjecture of Edelman-Sutton \cite{ES}.  As to finite $n$ bounds, we have the following.

\begin{theorem}
\label{Thm-Hermite}
For all $n \ge 1 $,  $0 < \eps \le 1$ and $\beta \ge 1$$:$
\begin{equation*}
     \PP \Bigl( \lambda_{\max}(H_{\beta})  \ge 2  \sqrt{n} (1+ \eps)  \Bigr)  \le  {C} e^{- \beta n \eps^{3/2}/C},
\end{equation*}
and 
\begin{equation*} 
     \PP \Bigl( \lambda_{\max}(H_{\beta})  \le   2 \sqrt{n} (1 -\eps)    \Bigr)  \le  {C^{\beta}} e^{- \beta n^2 \eps^{3}/C},
\end{equation*}
where $C$ is a numerical constant.
\end{theorem}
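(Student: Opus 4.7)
Throughout, the strategy is to work with the Dumitriu-Edelman tridiagonal realization of $H_\beta$: the spectrum of $H_\beta$ has the same law as that of a random symmetric tridiagonal matrix $T = T_\beta^{(n)}$ with iid Gaussian diagonal entries $\sqrt{2/\beta}\,g_i$ and independent chi off-diagonal entries $\chi_{(n-i)\beta}/\sqrt{\beta}$. This turns $\lambda_{\max}$ into a function of $2n-1$ independent variables, making Rayleigh-Ritz and concentration arguments natural at the level of matrix entries rather than of the correlated eigenvalue vector.

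For the left tail I would take $v \in \RR^n$ to be a positive unit bump of width $L$ supported in the edge window (where the off-diagonal chi entries have largest mean). Then
\[
   X := \langle v, Tv\rangle = \sum_i v_i^2 T_{ii} + 2\sum_i v_i v_{i+1} T_{i,i+1}
\]
satisfies $\lambda_{\max}(T) \ge X$ by Rayleigh-Ritz, and the mean $\EE X \approx 2\sum_i v_i v_{i+1}\sqrt{n-i}$ evaluates to approximately $2\sqrt{n}(1 - c\, L/n)$ for a bump of width $L$ at the edge. Calibrating $L \sim n\eps$ puts $\EE X$ at $2\sqrt{n}(1 - \eps/2)$, leaving a margin of order $\sqrt{n}\eps$ above the target $2\sqrt{n}(1-\eps)$. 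The fluctuation $X - \EE X$ is a sum of iid Gaussians and centered chi variables with variance at most $(\|v\|_4^4 + \|v\|_2^2\|v\|_\infty^2)/\beta \sim 1/(\beta L)$, so Gaussian/sub-Gaussian concentration yields
\[
   \PP\bigl(X \le 2\sqrt{n}(1-\eps)\bigr) \le C\exp\bigl(-c\,\beta L (\sqrt{n}\,\eps)^2\bigr),
\]
which with $L \sim n\eps$ becomes $C^\beta \exp(-c\,\beta n^2 \eps^3)$. The $C^\beta$ prefactor absorbs the $\beta$-dependence of the sub-Gaussian constants for the chi distribution as $\beta \ge 1$ varies.

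For the right tail Rayleigh-Ritz is unusable (it only lower-bounds $\lambda_{\max}$), and the raw log-Sobolev inequality for the density (\ref{betadens}), with log-Sobolev constant $\beta/2$, only delivers the Gaussian exponent $\beta n \eps^2$, which is weaker than the target $\beta n \eps^{3/2}$ in the Tracy-Widom regime. To extract the correct shape I would deploy a discrete Riccati / shooting argument on the three-term recurrence $(T\psi)_k = \mu\psi_k$ at $\mu = 2\sqrt{n}(1+\eps)$. Writing $\phi_k = \psi_{k+1}/\psi_k$ turns the eigenvalue equation into a Markov chain for $\phi$ driven by the independent noise; for the threshold value of $\mu$ the noise-free dynamics admits a stable attractor on the oscillatory branch, and the event $\lambda_{\max}(T) > \mu$ is equivalent to $\phi$ escaping this attractor inside an effective edge window of index length $\sim \eps^{-1/2}$. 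A Gaussian exponential-martingale bound on this first-passage event, exploiting the quadratic restoring force of the Riccati flow to convert accumulated noise into the threshold crossing, produces the exponent $\beta n \eps^{3/2}$ after optimization in the window length and crossing amplitude.

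The principal obstacle is the right tail: delivering this sharper-than-Gaussian exponent uniformly in $n$ and in $\eps \in (0,1]$ down to the Tracy-Widom scale $\eps \sim n^{-2/3}$ requires a careful implementation of the discrete Riccati analysis at finite $n$, with constants tracked uniformly in $\beta \ge 1$. A secondary technical difficulty common to both tails is controlling the small deficit $\EE\chi_m - \sqrt{m} = O(1/\sqrt{m})$ in the chi means, which enters the mean computation in the Rayleigh-Ritz step and the drift of the Riccati chain in the right-tail step; it can be absorbed into a slightly reduced effective $\eps$ or into the overall constant $C$.
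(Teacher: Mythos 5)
Your left-tail argument is essentially the paper's: pass to the Dumitriu--Edelman tridiagonal, plug in a flat unit bump of width $L\sim n\eps$ supported on the small-index side (where the off-diagonal $\chi_{(n-k)\beta}$ have largest mean), compute the deterministic deficit $\sim\sqrt n\,\eps$, and use Gaussian/sub-Gaussian concentration with $\|v\|_4^4\sim 1/L$ to obtain $\exp(-c\beta n^2\eps^3)$. The only point you elide is the discrete kinetic cost $\sqrt n\sum_k(v_{k+1}-v_k)^2 \sim \sqrt n /L \sim 1/(\sqrt n\,\eps)$, which forces the intermediate restriction $n\eps^{3/2}\ge 1$, removed afterward by the $C^\beta$ prefactor; the paper spells this out but it is the same test-vector computation.

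The right tail, however, is a genuine gap. You correctly note that Rayleigh--Ritz cannot upper-bound $\lambda_{\max}$ and that a naive log-Sobolev argument only gives the Gaussian exponent $\beta n\eps^2$, and you propose a discrete Riccati/shooting analysis of the three-term recurrence at $\mu=2\sqrt n(1+\eps)$. That is a plausible alternative route, but you then leave the crucial first-passage estimate as a black box and you yourself flag it as ``the principal obstacle''; as written it is not a proof. The paper instead proves a self-contained estimate (Proposition~\ref{Prop-IntUp}) via a discrete integration-by-parts identity (Lemma~\ref{parts}): the noise $\sum_k z_k v_k^2$ is rewritten in terms of partial sums over blocks of length $m$ and their running maxima $\Delta_m(k)$; a Cauchy--Schwarz step then absorbs one contribution into the gradient penalty $b\sqrt n\sum(v_{k+1}-v_k)^2$ and the other into the confining term $\frac b{\sqrt n}\sum k v_k^2$; Doob's maximal inequality gives sub-Gaussian tails on $\Delta_{2m}$, and the choice $m\sim\eps^{-1/2}$ balances the two resulting exponents at $\beta n\eps^{3/2}$. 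The chi off-diagonal noise is handled identically after establishing $\EE[e^{\lambda\chi}]\le e^{\lambda\EE\chi+\lambda^2/2}$ (Lemma~\ref{ChiProp}). Absent an actual execution of the finite-$n$ Riccati first-passage bound uniformly in $\beta\ge1$ and $\eps\in(0,1]$, your right-tail argument does not close.
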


The restriction to $\beta \ge 1$ is somewhat artificial.  On the other hand, bounds of this type cannot remain meaningful
all the way down to $\beta = 0$.  Our method in fact applies to all beta bounded below, though with the reported $C$ 
 a function of whatever specified minimal beta.  Keeping $\beta \ge 1$ covers the cases of classical interest 
while allowing for a clearer picture of the achieved beta dependence in our estimates (as well as cleaner proofs).

For completeness we also mention that for $\eps$ beyond $O(1)$, the large-deviation right-tail inequality 
takes the form 

\begin{equation}
\label{largedev}
 \PP \Bigl( \lambda_{\max}(H_{\beta})  \ge 2  \sqrt{n} (1+ \eps)  \Bigr)  \le  {C} e^{- \beta n \eps^{2}/C}.
 \end{equation}
For $\beta = 1$ and $ 2$ this follows from standard 
net arguments on the corresponding Gaussian matrices (see e.g.\ \cite{L2}). For other values of $\beta $,
crude bounds on the tridiagonal models discussed below immediately yield the claim.

Continuing, those well versed in random matrix theory will know that this style of small deviation questions
are better motivated in the context of ``null" Wishart matrices, given their application in multivariate statistics.  
Also known as the Laguerre Orthogonal or Unitary Ensembles (L$\{$O/U$\}$E),
these are ensembles
of type $X X^*$ in which $X$ is an $n \times \kappa$ matrix comprised of i.i.d.\ real or complex Gaussians.    

By the obvious
duality, we may assume here that $\kappa \ge n$.     When $n \rightarrow \infty$ with the $\kappa/n$
converging to a finite constant (necessarily larger than one), the appropriately centered and scaled largest eigenvalue
was shown to converge to the natural Tracy-Widom distribution; first by Johansson \cite{J} in the complex ($\beta = 2$) case,
then by Johnstone \cite{JS} in the real ($\beta = 1$) case.  Later, El Karoui \cite{EK} proved the same conclusion allowing
$\kappa/n \rightarrow \infty$.   

For $\beta = 2$ and $\kappa$ a fixed multiple of $n$, 
a small deviation upper bound at the right-tail
(as well as the corresponding statement for the minimal eigenvalue in the ``soft-edge"
scaling) was known earlier (see \cite {L1, L2}), extended recently to non-Gaussian matrices in \cite {FS}.

Once again there is a general beta version. Consider a density of the form (\ref{betadens}) in which the Gaussian weight
$w(\lambda) = e^{-\beta \lambda^2/4} $ on $\RR$ is replaced by $w(\lambda) $ $= \lambda^{(\beta/2) (\kappa - n+1) + 1}$ $e^{- \beta \lambda/2}$, now restricted to $\RR_+$.   
Here $\kappa$ can be any real number strictly larger than $n-1$.  It is when $\kappa$
is an integer and $\beta = 1$ or $2$ that one recovers the eigenvalue law for the real or complex Wishart matrices just described.
For general $\kappa$ and $\beta > 0$ the resulting law on positive points $\lambda_1, \ldots, \lambda_n$ is referred to as the $\beta$-Laguerre ensemble, here $L_{\beta}$ for short.

Using a tridiagonal model for $L_{\beta}$ 
introduced in \cite{DE}, it is proved in \cite{RRV}: for $\kappa +1  >  n  \rightarrow \infty$ with $\kappa/n \rightarrow c \ge 1$,
\begin{equation}
\label{LagLimit}
      \frac{  ( \sqrt{\kappa n} )^{1/3} }{ ( \sqrt{\kappa} + \sqrt{n})^{4/3}}
          \Bigl( \lambda_{\max}(L_{\beta}) - (\sqrt{\kappa} + \sqrt{n})^2 \Bigr)  \Rightarrow TW_{\beta}.
\end{equation}
This covers all previous results for real/complex null Wishart matrices.
Comparing (\ref{HLimit}) and (\ref{LagLimit})  one sees that $O(n^{2/3} \eps)$ deviations in
the Hermite case should correspond to deviations of order 
$ (\kappa n)^{1/6} ( \sqrt{\kappa} + \sqrt{n} )^{2/3}  \eps $ $ = O ( \kappa^{1/2} n^{1/6} \eps)$ in the 
Laguerre case.
That is, one might expect bounds exactly of the form found in Theorem 1 with appearances of 
$n$ in each exponent replaced by $\kappa^{3/4} n^{1/4}$.  What we have is the following.

\begin{theorem}  
\label{Thm-Laguerre}
For all $\kappa + 1 > n \ge 1$, $0 < \eps \le 1 $ and $\beta \ge 1$$:$
$$
    \PP \Bigl(  \lambda_{\max}(L_{\beta})   \ge ( \sqrt{\kappa} + \sqrt{n})^2 (1 + \eps)  \Bigr) \le C  
                           e^{- \beta \sqrt{n \kappa} \eps^{3/2}  ( \frac{1}{\sqrt{\eps}}  \wedge   \left( \frac{\kappa}{ n} \right)^{1/4}    )/ C},
$$
and
$$
   \hspace{-.5cm}   \PP \Bigl(  \lambda_{\max}(L_{\beta})   \le  ( \sqrt{\kappa} + \sqrt{n})^2 (1- \eps) \Bigr) \le 
                           C^{\beta}  e^{-\beta  n\kappa   \eps^3  ( \frac{1}{\eps} \wedge \left( \frac{\kappa}{n} \right)^{1/2} )  / C}. 
$$
Again, $C$ is some numerical constant.
\end{theorem}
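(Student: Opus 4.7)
The natural approach is to mirror the argument used for Theorem~\ref{Thm-Hermite}, now working with the bidiagonal Dumitriu--Edelman matrix model for $L_\beta$: $L_\beta \stackrel{d}{=} BB^{*}$, where $B$ is the bidiagonal matrix with independent entries
\[
a_i := B_{i,i} = \tfrac{1}{\sqrt{\beta}} \chi_{\beta(\kappa - i + 1)}, \qquad b_i := B_{i+1,i} = \tfrac{1}{\sqrt{\beta}}\chi_{\beta (n-i)},
\]
so that $\lambda_{\max}(L_\beta) = \|B\|^{2} = \sup_{\|u\|=1}\|Bu\|^2$. Let $\bar a_i = \sqrt{\kappa-i+1}$, $\bar b_i = \sqrt{n-i}$ denote the deterministic skeletons; the anticipated edge $(\sqrt{\kappa}+\sqrt{n})^2$ is attained at the top-left corner of $B$, near $i=1$.

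\textbf{Right tail.} I would upper bound $\|B\|^2$ via the weighted Cauchy--Schwarz split $(a_i u_i + b_{i-1} u_{i-1})^2 \leq (1+\phi_i^{-1}) a_i^2 u_i^2 + (1+\phi_i) b_{i-1}^2 u_{i-1}^2$, valid for any positive weights $\phi_i$, giving
\[
\|B\|^2 \leq \max_{1 \leq i \leq n}\bigl[(1+\phi_i^{-1}) a_i^2 + (1+\phi_{i+1}) b_i^2\bigr].
\]
Taking the deterministic minimizer $\phi_i = \bar b_{i-1}/\bar a_i$ recovers the classical bound $\max_i (\bar a_i+\bar b_{i-1})^2 = (\sqrt{\kappa}+\sqrt{n})^2$. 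The probability that this maximum exceeds $(1+\eps)$ times the edge is then controlled by a union bound in $i$ together with the two-regime chi-square tail $\PP(\chi_k \geq \sqrt{k}(1+t)) \leq \exp(- kt^2/C \wedge kt/C)$. Since $a_i^2$ carries $\beta(\kappa-i+1)$ degrees of freedom against $\beta(n-i)$ for $b_i^2$, balancing the two sources of fluctuation against the target $\eps(\sqrt{\kappa}+\sqrt{n})^2$ produces the factor $(\kappa/n)^{1/4}$, while the crossover between the Gaussian and sub-exponential chi-square regimes yields the alternative $\eps^{-1/2}$ in the $\min$.

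\textbf{Left tail.} For the lower bound I would evaluate the Rayleigh quotient $\|Bu^\star\|^2/\|u^\star\|^2$ at a deterministic trial vector $u^\star$ supported on a window of $m$ coordinates near $i=1$, chosen essentially uniform on its support in order to approximate the top singular direction of the skeleton $B_0$ built from $(\bar a_i,\bar b_i)$. A Taylor expansion of $(\bar a_i+\bar b_{i-1})^2$ shows that the deterministic Rayleigh quotient drops by $\asymp m(\sqrt{\kappa}+\sqrt{n})^2/\sqrt{\kappa n}$ from the true edge, so matching this to the target $\eps(\sqrt{\kappa}+\sqrt{n})^2$ fixes the optimal window scale $m \sim \eps\sqrt{\kappa n}$. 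Expanding $\|Bu^\star\|^2$ as a weighted sum of $\sim m$ independent $a_i^2$, $b_i^2$, and cross terms $a_i b_{i-1}$, a Bernstein estimate bounds the stochastic under-shoot by $\exp(-c\beta m \eps^2 \kappa \wedge c\beta m \eps \sqrt{\kappa n})$; substituting the optimal $m$ produces the claimed exponent $\beta n\kappa\eps^3(\eps^{-1}\wedge\sqrt{\kappa/n})/C$, the $\min$ again tracking the Gaussian/sub-exponential chi-square transition and the $\sqrt{\kappa/n}$ coming from the different numbers of degrees of freedom carried by $a_i$ versus $b_i$ on the window.

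\textbf{Main obstacle.} I expect the hard part to be the left tail, where recovering the cubic exponent in $\eps$ forces $u^\star$ to closely mimic the soft-edge (Airy-type) eigenfunction of the stochastic operator limit of~\cite{RRV}, and requires controlling the cross terms $a_i b_{i-1}$ together with their correlations against the diagonal terms uniformly over the $m$-window. The asymmetric case $\kappa \neq n$ is also genuinely new relative to Theorem~\ref{Thm-Hermite}: it forces an anisotropic calibration of $u^\star$ and is responsible for the $(\kappa/n)^{1/2}$ factor, and threading this anisotropy through both tails while preserving the correct linear $\beta$ dependence in the exponent is the principal technical difficulty.
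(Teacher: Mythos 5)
Your left-tail plan is directionally the paper's: lower bound $\lambda_{\max}(L_\beta)$ by a deterministic Rayleigh quotient supported on a window of scale $m\sim\eps\sqrt{\kappa n}$ near the top corner, expand into diagonal $\chi^2$ and cross terms, and apply a Chernoff/Bernstein estimate; your exponent bookkeeping matches the paper's, and the constant vector covers $\eps \ge \sqrt{n/\kappa}$. One correction to your expectations: the paper's test vector is the same piecewise-linear tent used in the Hermite proof, merely rescaled by $\delta=\sqrt{n/\kappa}$, so no Airy-type profile is required and this is not in fact the hard direction.

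The right-tail plan has a genuine gap. The Gershgorin-type bound $\|B\|^2 \le \max_i R_i$ with $R_i := (1+\phi_i^{-1})a_i^2 + (1+\phi_{i+1})b_i^2$ and deterministic $\phi_i=\bar b_{i-1}/\bar a_i$ is a valid a.s.\ inequality, but the ensuing union bound in $i$ cannot produce the claimed rate. Already the single term $R_1$ has $\EE R_1 = (\sqrt\kappa+\sqrt n)^2$ and $\mathrm{Var}(R_1)\asymp(\sqrt\kappa+\sqrt n)^4/(n\beta)$, so $\PP\bigl(R_1 \ge (\sqrt\kappa+\sqrt n)^2(1+\eps)\bigr)$ is of order $e^{-c\beta n\eps^2}$; for $0<\eps\le\sqrt{n/\kappa}$ this is strictly \emph{larger} than the target $e^{-\beta\kappa^{3/4}n^{1/4}\eps^{3/2}/C}$, so the row-maximum can only yield the large-deviation rate. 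The slack is exactly the point: a large eigenvalue is a more coherent event than a single row fluctuating up, because the near-optimal eigenvector is spread over a window of $m\sim\eps^{-1/2}(n/\kappa)^{1/4}$ coordinates, sees an \emph{average} of the noise over that window, and simultaneously pays the deterministic drift $-c\sqrt n\sum(v_{k+1}-v_k)^2-(c/\sqrt n)\sum k v_k^2$ isolated in Lemma~\ref{Lc}. The missing device is the Abel-summation identity of Lemma~\ref{parts} combined with Doob's maximal inequality as in Proposition~\ref{Prop-IntUp}: rewrite $\sum_k z_k v_k^2$ in terms of windowed partial sums $\Delta_m(k)$, tail-bound their maxima, and optimize the window $m$. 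It is the choice $m\sim\eps^{-1/2}(n/\kappa)^{1/4}$ that produces both the $\eps^{3/2}$ rate and the $(\kappa/n)^{1/4}\wedge\eps^{-1/2}$ crossover; the crossover is not the per-entry $\chi^2$ Gaussian/sub-exponential transition but the scale $\eps\sim\sqrt{n/\kappa}$ at which the optimal window shrinks to one site.
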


The right-tail inequality is extended to non-Gaussian matrices in \cite {FS}.
The rather cumbersome exponents in Theorem 2 do produce the anticipated decay,
though only for $\eps \le \sqrt{n/\kappa}$.
For $\eps \ge \sqrt{n/\kappa}$, the right and left-tails become linear and quadratic in $\eps$ respectively.
This is to say that the large deviation regime begins at the order $O(\sqrt{n/\kappa})$ rather than $O(1)$
as in the $\beta$-Hermite case. To understand this,
we recall that, normalized by $1/\kappa$, the counting measure of the $L_\beta$ points is asymptotically
supported on the interval with endpoints $(1 \pm \sqrt{n/\kappa})^2$.  This statement is precise with convergent
$n/\kappa$, and the limiting measure that of Mar\v{c}enko-Pastur.   Either way, $\sqrt{n/\kappa}$ is identified
as the spectral width, in contrast with the semi-circle law appearing in the $\beta$-Hermite case which is of width
one (after similar normalization).
Of course, in the more usual set-up when $c_1 n \le \kappa \le c_2 n$  ($c_1 \ge 1$ necessarily) all this is moot: the exponents above may then be replaced with $-\beta n \eps^{3/2}/C $ and $
-\beta n^2 \eps^3/C$ for $\eps$ in an $O(1)$ range with no loss of accuracy. And again, the large deviation tails were known in this setting 
for  $\beta = 1, 2 $.

An immediate consequence of the preceding  is a finite $n$ (and/or $\kappa$) bound on the variance of $\lambda_{\max}$ in line with the known limit theorems.  
This simple fact had only previously been available for GUE and LUE
(see the discussion in \cite{L2}).

\begin{corollary}
\label{Cor-VarianceBound}
Take $\beta \ge 1$.  Then,
\begin{equation}
 {\rm Var} \Bigl[ \lambda_{\max}(H_{\beta})  \Bigr]  \le C_{\beta}  \, n^{-1/3},  \  \  \  
   {\rm  Var} \Bigl[ \lambda_{\max}(L_{\beta} ) \Bigr] \le C_{\beta} \,  \kappa n^{-1/3}
\end{equation}
with now constant(s) $C_{\beta}$ dependent upon $\beta$.
\end{corollary}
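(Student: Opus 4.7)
The plan is to derive each variance bound from the corresponding tail estimates via the identity
\begin{equation*}
   \mathrm{Var}(X) \le \EE\bigl[(X-a)^{2}\bigr] = \int_{0}^{\infty} 2t\,\PP\bigl(|X-a| \ge t\bigr)\,dt,
\end{equation*}
valid for any constant $a$. For the Hermite bound, take $X = \lambda_{\max}(H_{\beta})$ and $a = 2\sqrt{n}$. Setting $\eps = t/(2\sqrt{n})$ in Theorem~\ref{Thm-Hermite} gives, for $0 \le t \le 2\sqrt{n}$,
\begin{equation*}
   \PP(X - a \ge t) \le C\,e^{-c\beta n^{1/4} t^{3/2}},
   \qquad
   \PP(X - a \le -t) \le C^{\beta}\,e^{-c\beta n^{1/2} t^{3}}.
\end{equation*}
The key move is the change of variables $t = s\,n^{-1/6}$, matching the Tracy--Widom scale of (\ref{HLimit}): it turns the exponents above into $s^{3/2}$ and $s^{3}$, while the Jacobian combined with the factor $2t$ extracts a common prefactor $n^{-1/3}$. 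Since $\int_{0}^{\infty} s\,e^{-c\beta s^{3/2}}\,ds$ and $\int_{0}^{\infty} s\,e^{-c\beta s^{3}}\,ds$ are $\beta$-dependent constants, this yields the announced $C_{\beta}\,n^{-1/3}$.

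The Laguerre case proceeds by the same scheme with $a = (\sqrt{\kappa}+\sqrt{n})^{2}$ and the substitution $t = s\,(\sqrt{\kappa}+\sqrt{n})^{4/3}/(\sqrt{\kappa n})^{1/3}$, namely the inverse of the scale in (\ref{LagLimit}). In the small-$\eps$ regime $\eps \le \sqrt{n/\kappa}$ where the Theorem~\ref{Thm-Laguerre} exponents read $\beta\sqrt{n\kappa}\,\eps^{3/2}(\kappa/n)^{1/4}$ and $\beta n\kappa\,\eps^{3}(\kappa/n)^{1/2}$, a direct calculation shows that this substitution collapses them to $\beta s^{3/2}\sqrt{\kappa}/(\sqrt{\kappa}+\sqrt{n})$ and $\beta s^{3}\kappa/(\sqrt{\kappa}+\sqrt{n})^{2}$, each bounded below by a constant multiple of $\beta s^{3/2}$ or $\beta s^{3}$ since $\kappa \ge n$. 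The Jacobian prefactor $(\sqrt{\kappa}+\sqrt{n})^{8/3}/(\sqrt{\kappa n})^{2/3}$ is then of order $\kappa\,n^{-1/3}$, as desired.

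The main obstacle is controlling the complementary large-deviation regime, where the tail bounds stated in Theorems~\ref{Thm-Hermite} and~\ref{Thm-Laguerre} either do not apply or change character. For the right tails, (\ref{largedev}) and the second branch of the minimum in Theorem~\ref{Thm-Laguerre} provide Gaussian and linear decay in $\eps$ respectively, rendering these contributions exponentially small in $n$ or $\sqrt{n\kappa}$ and hence negligible next to the main term. For the Laguerre left tail the constraint $\lambda_{\max}(L_{\beta}) \ge 0$ removes the issue entirely when $\eps > 1$. For the Hermite left tail at $\eps > 1$ one can exploit the symmetry $\lambda \to -\lambda$ of the density (\ref{betadens}) to write $\PP(\lambda_{\max}(H_{\beta}) \le -u) = \PP(\lambda_{\min}(H_{\beta}) \ge u) \le \PP(\lambda_{\max}(H_{\beta}) \ge u)$ and then invoke (\ref{largedev}) to produce Gaussian decay in $u$; the resulting contribution to the variance integral is again exponentially small in $n$ and thus absorbed by the $C_{\beta}\,n^{-1/3}$ term.
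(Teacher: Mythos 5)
Your overall strategy mirrors the paper's: bound ${\rm Var}(X) \le \EE[(X-a)^2]$, write this as a tail integral, feed in the Theorem \ref{Thm-Hermite} / Theorem \ref{Thm-Laguerre} small-deviation estimates to produce the $n^{-1/3}$ and $\kappa n^{-1/3}$ scales (your change of variables to the Tracy--Widom scale is a clean way to organize that computation), and then argue the large-$\eps$ residual is negligible. Your treatment of the right-tail large-deviation regime (via (\ref{largedev}) in the Hermite case and the second branch of the minimum in the Laguerre case) and of the Laguerre left tail at $\eps > 1$ (via positivity of $\lambda_{\max}(L_\beta)$) is sound and essentially what the paper does.

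The one place you genuinely depart from the paper, and where there is a gap, is the Hermite left tail for $\eps > 1$. The paper exploits the tridiagonal model to observe that $\lambda_{\max}(H_\beta)$ stochastically dominates $(1/\sqrt{\beta})\max_k g_k$, giving $\PP(\lambda_{\max} \le -\delta\sqrt{n}) \le e^{-\beta n^2\delta^2/C}$ and hence useful decay the moment $\eps$ leaves the unit interval. Your alternative, the symmetry $\PP(\lambda_{\max} \le -u) = \PP(\lambda_{\min} \ge u) \le \PP(\lambda_{\max} \ge u)$ followed by (\ref{largedev}), is an attractive idea, but it is vacuous on the window $2\sqrt{n} < t < 4\sqrt{n}$ (equivalently $1 < \eps < 2$): there $u = t - 2\sqrt{n}$ lies in $(0, 2\sqrt{n})$, so the event $\{\lambda_{\max} \ge u\}$ is not a right-tail deviation at all, and (\ref{largedev}) gives nothing better than the trivial bound $1$. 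Since this is an interval of length $O(\sqrt{n})$ and the integrand carries a factor $2t = O(\sqrt{n})$, the contribution is a priori $O(n)$, which overwhelms the target $C_\beta n^{-1/3}$. The fix is simple --- for $1 \le \eps \le 2$ use monotonicity, $\PP\bigl(\lambda_{\max} \le 2\sqrt{n}(1-\eps)\bigr) \le \PP(\lambda_{\max} \le 0) \le C^\beta e^{-\beta n^2/C}$ by Theorem \ref{Thm-Hermite} at $\eps = 1$, or invoke the paper's remark that the left-tail small-deviation bound in fact extends to any $\eps = O(1)$ --- but as written your argument leaves this intermediate range uncontrolled.
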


The same computation behind Corollary \ref{Cor-VarianceBound}  implies that
$$\limsup_{n \rightarrow \infty} n^{p/6} \EE \big |\lambda_{\max}(H_{\beta}) - 2 \sqrt{n} \big  |^p <\infty$$ 
for any $p$, and similarly for $\lambda_{\max}(L_{\beta})$. Hence, we also conclude that 
all moments of the (scaled) maximal $H_{\beta}$ and $L_{\beta}$ eigenvalues converge
to those for the $TW_{\beta}$ laws (see \cite {BDMLMZ} for $\beta =2$).

Finally, there is the matter of whether any of the above upper bounds are tight.  
We answer this in the affirmative in the Hermite setting.

\begin{theorem}
\label{Thm-Lower}
There is a numerical constant $C$ so that
\begin{equation*}
     \PP \Bigl( \lambda_{\max}(H_{\beta})  \ge  2  \sqrt{n}   (1 + \eps) \Bigr)  \ge  C^{-\beta} e^{- C \beta n \eps^{3/2}},
\end{equation*}
and
\begin{equation*} 
     \PP \Bigl( \lambda_{\max}(H_{\beta})   \le   2  \sqrt{n}  (1 - \eps)  \Bigr)  \ge  C^{-\beta} e^{- C \beta n^2 \eps^{3}}.
\end{equation*}
The first inequality holds for all $n \ge 1, 0 < \eps \le 1,$  and $\beta \ge1$.  For the second inequality,
the range of $\eps$ must be kept
sufficiently small, $0 < \eps \le 1/C$ say.
\end{theorem}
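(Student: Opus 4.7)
The plan is to obtain both lower bounds by producing an explicit favorable event inside the deviation event and then estimating its probability from below using the joint density \eqref{betadens} together with the Selberg integral formula for $Z_{n,\beta}$.

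For the right-tail inequality, let $L = 2\sqrt n\,(1+\eps)$ and let $\alpha_1 > \alpha_2 > \cdots > \alpha_n$ denote the classical semicircle-quantile positions on $[-2\sqrt n, 2\sqrt n]$. I would restrict to the ``pushed'' event
\[
E_{\mathrm r}\,=\,\bigl\{\lambda_1\in[L,L+\delta_1]\bigr\}\,\cap\,\bigl\{\lambda_k\in[\alpha_k-\delta_k/2,\alpha_k+\delta_k/2],\ k\ge 2\bigr\},
\]
with $\delta_1$ comparable to the soft-edge spacing $n^{-1/6}$ and $\delta_k$ (for $k\ge 2$) comparable to the local bulk spacing $1/(n\rho_{\mathrm{sc}}(\alpha_k))$. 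On $E_{\mathrm r}$ the integrand of \eqref{betadens} is within a multiplicative constant of its value at the frozen configuration $(L,\alpha_2,\ldots,\alpha_n)$. Comparing to the reference event $E^{(0)}$ obtained by replacing the interval for $\lambda_1$ by $[\alpha_1-\delta_1/2,\alpha_1+\delta_1/2]$ (which represents the typical position of the maximum, so $\PP(E^{(0)})$ is bounded below by a universal constant), one obtains
\[
\log \PP(E_{\mathrm r})\ \ge\ \log \PP(E^{(0)})\ +\ \beta\sum_{k\ge 2}\log\!\frac{L-\alpha_k}{\alpha_1-\alpha_k}\ -\ \tfrac{\beta}{4}\bigl(L^2-\alpha_1^2\bigr)\ -\ C\beta\log n.
\]
A direct calculation with the semicircle log-integral $U(a)=\int_{-1}^{1}\log(a-\mu)\,\rho_{\mathrm{sc}}(\mu)\,d\mu$, whose expansion at $a=1+\eps$ reads $U(1+\eps) - U(1) = 2\eps - \tfrac{4\sqrt 2}{3}\eps^{3/2} + O(\eps^2)$, shows that after the $O(\beta n\eps)$ contributions from the sum and the Gaussian weight cancel there remains a net exponent of order $-\beta n\eps^{3/2}$, which is the target.

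For the left-tail bound, I would freeze \emph{all} eigenvalues at a compressed configuration. Let $\rho_\eps$ be a probability measure supported in $[-L', L']$, $L' = 2\sqrt n\,(1-\eps)$, chosen so that the log-gas free energy
\[
I(\rho)\ =\ \tfrac{1}{4}\int x^2\,d\rho(x)\ -\ \iint\log|x-y|\,d\rho(x)\,d\rho(y)
\]
satisfies $I(\rho_\eps) - I(\rho_{\mathrm{sc}}) = O(\eps^3)$; the Dean-Majumdar compressed equilibrium, a semicircle-like measure with a square-root singularity at the pushed endpoints, serves as an explicit choice. Setting $E_\ell = \{\lambda_k \in [y_k - \delta_k/2, y_k + \delta_k/2],\ k=1,\ldots,n\}$ with $y_k$ the quantiles of $\rho_\eps$, a density estimate together with Selberg's exact formula for $\log Z_{n,\beta}$ yields
\[
\log \PP(E_\ell)\ \ge\ -\beta n^2 \bigl[I(\rho_\eps) - I(\rho_{\mathrm{sc}})\bigr]\ -\ C\beta\log n\ \ge\ -C\beta n^2\eps^3 - C\beta\log n,
\]
which is the claim once the $\log n$ term is absorbed into $C^{-\beta}$.

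The main obstacle is controlling the error terms in the density estimates tightly enough that they can be absorbed into $C^{-\beta}$ and into the leading exponent. For the right tail, the factors $(L-\alpha_k)/(\alpha_1-\alpha_k)$ are most delicate for $k$ near the edge, where the spacings $\alpha_1-\alpha_k$ are only of order $n^{-1/6}k^{2/3}$; sharp soft-edge quantile asymptotics are required so that the $O(\beta n\eps)$ cancellation leaves a clean $O(\beta n\eps^{3/2})$ remainder. For the left tail, the compressed density $\rho_\eps$ has the standard square-root singularity at $\pm L'$, so the widths $\delta_k$ near the pushed endpoints must be chosen with care, and Selberg's exact formula must be combined with the compressed free-energy computation at matching precision. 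These are the only non-routine points; the restriction $\eps\le 1/C$ in the left-tail inequality enters precisely through the regime of validity of the $O(\eps^3)$ expansion of $I(\rho_\eps) - I(\rho_{\mathrm{sc}})$.
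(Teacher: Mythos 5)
Your route is genuinely different from the paper's. The paper works entirely within the Dumitriu--Edelman tridiagonal model and exploits the Rayleigh--Ritz variational characterization of $\lambda_{\max}$: for the right tail one fixes a test vector $v$ localized on a window of length $\eps^{-1/2}$ and uses the independent Gaussian diagonal noise to push $H_a(v,g)$ positive, while the $\chi$-noise contribution is shown to be harmless; for the left tail the paper conditions on the right-tail \emph{upper} bound (Proposition~\ref{Prop-IntUp}) and then decomposes the Gaussian diagonal into a block mean $S_L$ plus an independent fluctuation, reducing the event to a single one-dimensional Gaussian tail for $S_L$. Your proposal is instead a Coulomb-gas / density argument: freeze the points near quantile configurations, compare integrands, and invoke Selberg for the normalizing constant. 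These are quite different in spirit, and your version would, if completed, give a proof independent of the tridiagonal representation, which is conceptually attractive.

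However, there is a real gap at the step where you assert that the reference event $E^{(0)}$ (all $\lambda_k$ within $O(\delta_k)$ of the semicircle quantiles, with $\delta_k$ the local mean spacing) has probability bounded below by a universal constant, to be absorbed into $C^{-\beta}$. This is not routine: pinning each of the $n$ points to within a constant multiple of its local spacing is a codimension-$n$ constraint, and known rigidity estimates give concentration only on a scale $(\log n)^{O(1)}$ times the local spacing; with boxes of exactly spacing width a naive union bound leaves a factor like $n\cdot e^{-c}$, which is useless, while enlarging the boxes ruins the ``density constant over the box'' comparison that the ratio argument rests on. What you really need is the statement that $Z_{n,\beta}$, evaluated via Selberg, agrees with the integrand at the quantile configuration times the product of the box widths up to a factor absorbable into $C^{\beta}$ (i.e.\ uniform in $n$, at worst polynomial). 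That is a quantitative Laplace/Riemann-sum analysis of the log-gas partition function whose subleading (entropy and CLT) corrections must cancel exactly between numerator and denominator; it is essentially the $1/n$-expansion of $\log Z_{n,\beta}$, and it is far from elementary. The same issue recurs in your left-tail argument when comparing $Z_{n,\beta}$ against the compressed free energy. The paper's tridiagonal route sidesteps this entirely: the variational inequality $\PP(\sup_v H(v)\ge\cdot)\ge\PP(H_a(v^*)\ge\cdot)$ for a single test vector $v^*$ only requires one-dimensional Gaussian and $\chi$ tail estimates and never touches the partition function. Unless you can supply the missing partition-function comparison at the stated precision, the argument as sketched does not establish the $C^{-\beta}$ prefactor.
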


Our proof of the right-tail lower bound takes advantage of a certain independence in the $\beta$-Hermite tridiagonals not immediately
shared
by the Laguerre models,  but the basic strategy also works in the Laguerre case.  Contrariwise, our
proof of the left-tail lower bound uses a fundamentally Gaussian argument that is not available in the Laguerre setting.

The next section introduces the tridiagonal matrix models and gives an indication of our approach.   The upper bounds
(Theorems 1 and 2, Corollary 3)
are proved in Section 3; the $H_{\beta}$ lower bounds in Section 4.  Section 5 considers the  analog of the right-tail upper
bound for the minimal eigenvalue in the $\beta$-Laguerre ensemble, this case holding the potential for some novelty granted
the existence of a different class of limit theorems (hard edge) depending on the limiting ratio $n/\kappa$.  
While our method does produce a bound, the conditions
on the various parameters are far from optimal. For this reason we relegate the statement, along with the proof and further discussion,
to a separate section.

\section{Tridiagonals}

The results of \cite{RRV} identify the general $\beta>0$ Tracy-Widom law through a random variational principle:
\begin{equation}
\label{TW}
  TW_{\beta} = \sup_{f \in L}  \left\{  \frac{2}{\sqrt{\beta}}   \int_0^{\infty} f^2(x) db(x)  - \int_0^{\infty} 
      \big [ (f'(x))^2  +  x f^2(x) \big ] dx   \right\}, 
\end{equation}
in which $x \mapsto b(x)$ is a standard Brownian motion and $L$ is the space of functions $f$ which vanish at the
origin and satisfy $\int_0^{\infty} f^2(x) dx = 1$, $\int_0^{\infty} [ (f'(x))^2 + x f^2(x)] dx < \infty$.   The equality here is in
law, or you may view (\ref{TW}) as the definition of $TW_{\beta}$.

This variational point of view also guides the proof of the convergence of the centered and scaled $\lambda_{\max}$
(of $H_{\beta}$ or $L_{\beta}$) to $TW_{\beta}$.  In particular, given the random tridiagonals which we are about to
introduce, one always has a characterization of $\lambda_{\max}$ through Raleigh-Ritz.  In \cite{RRV}, the point is
to show this ``discrete" variational problem goes over to the continuum problem (\ref{TW}) in a suitable sense. 
Furthermore, an analysis of the continuum problem has been shown to give sharp estimates on the tails of
the $\beta$ Tracy-Widom law (again see \cite{RRV}).  Our idea here is therefore retool those arguments for
the finite $n$, or discrete, setting.

We start with the Hermite case. 
Let $g_1, g_2, \dots g_n$ be independent Gaussians with
mean $0$ and variance 2.  Let also $\chi_{\beta},$
$\chi_{2\beta },$ $\dots,$ $ \chi_{(n-1)\beta} $ be independent
$\chi$ random variables of the indicated parameter.
Then, re-using notation, \cite{DE} proves that the $n$ eigenvalues of the random tridiagonal matrix
\begin{equation*}
\label{thematrix}
H_{\beta} =  \frac{1}{\sqrt{\beta}} \left[ \begin{array}{ccccc}  g_1 & \chi_{(n-1)\beta} &&& \\
\chi_{(n-1)\beta} &  g_2 & \chi_{(n-2)\beta} & &\\
&\ddots & \ddots & \ddots & \\
& & \chi_{2\beta } &  g_{n-1} & \chi_{\beta} \\
& & & \chi_{\beta} &  g_{n}  \\
 \end{array}\right]
\end{equation*}
have joint law (\ref{betadens}).\footnote{For $\beta=1$ or $2$ this can be seen by applying Householder
transformation to the ``full" GOE or GUE matrices, and appears to have been used first in a random matrix
theory context by Trotter \cite{Trotter}.}
Centering appropriately, we define: for $v = (v_1, \dots, v_n) \in \RR^n$, 
\begin{eqnarray}
      H(v) & =  & v^T  [ H_{\beta}  -  2  \sqrt{n} I_n ] v  \\
               & = & \frac{1}{\sqrt{\beta}}  \sum_{k=1}^n g_k v^2_k +  \frac{2}{\sqrt{\beta}} \sum_{k=1}^{n-1} \chi _{n-k} v_k v_{k+1}
          - 2 \sqrt n \sum_{k=1}^n v^2_k . \nonumber
\end{eqnarray}
The problem at hand (Theorem 1) then becomes that of estimating
\begin{equation}
\label{inH}
   \PP \Bigl( \sup_{||v||_2 = 1} H(v) \ge \sqrt{n} \eps \Bigr)  \  \   \mbox{   and   } \   \  \PP  \Bigl( \sup_{||v||_2 = 1} H(v) \le - \sqrt{n} \eps \Bigr), 
\end{equation}
where we have introduced the usual Euclidean norm $||v||_2^2 =  \sum_{k=1}^n v_k^2$.  To make the connection
between $H(v)$ and the continuum form  (\ref{TW}) even more plain we have the following.

\begin{lemma} For any $c > 0$ define
\label{Hc}
\begin{eqnarray}
\label{Hb} 
    H_c(v)  & =  &  \frac{1}{\sqrt{\beta}} \sum_{k=1}^n g_k v^2_k 
                  +  \frac{2}{\sqrt{\beta}} \sum_{k=1}^{n-1} \big [ \chi _{\beta(n-k)} - \EE (\chi _{\beta(n-k)}) \big] v_k v_{k+1}   \\
                   &  &  - c \sqrt n \sum_{k=0}^n (v_{k+1} - v_k)^2 
                         - {c \over \sqrt n} \sum_{k=1}^n k \, v^2_k  \nonumber
\end{eqnarray}
in which it is understood that $v_0 = v_{n+1} = 0$.  There exist numerical constants,  $a >  b > 0$,  so that
$$
        H_a(v) \le H(v) \le H_b(v)  \   \mbox{   for all v} \in \RR^n,
$$
granted $\beta \ge 1$.
\end{lemma}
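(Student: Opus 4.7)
The plan is to reduce the lemma to a purely deterministic quadratic form inequality. Since the Gaussian terms $\frac{1}{\sqrt{\beta}}\sum_k g_k v_k^2$ and the centered-$\chi$ cross-terms appear identically in $H(v)$ and $H_c(v)$, the comparison reduces to
\[
D_a(v)\,\le\,D(v)\,\le\,D_b(v),
\]
where $D(v) := 2\sum_{k=1}^{n-1}\mu_k v_k v_{k+1} - 2\sqrt{n}\sum_{k=1}^n v_k^2$ with $\mu_k := \tfrac{1}{\sqrt{\beta}}\EE(\chi_{\beta(n-k)})$, and $D_c(v) := -c\,Q_2(v)$ with
\[
Q_2(v) := \sqrt{n}\sum_{k=0}^n (v_{k+1}-v_k)^2 \,+\, \frac{1}{\sqrt{n}}\sum_{k=1}^n k\, v_k^2.
\]
The discrete identity $2 v_k v_{k+1} = v_k^2 + v_{k+1}^2 - (v_{k+1}-v_k)^2$, together with the convention $\mu_0 = \mu_n := 0$, rewrites $D(v) = -Q_1(v)$, where
\[
Q_1(v) := \sum_{k=1}^n \alpha_k v_k^2 \,+\, \sum_{k=1}^{n-1}\mu_k (v_{k+1}-v_k)^2,
\qquad \alpha_k := 2\sqrt{n} - \mu_k - \mu_{k-1}.
\]
It thus suffices to exhibit constants $a > b > 0$ with $b\, Q_2 \le Q_1 \le a\, Q_2$ for every $v \in \RR^n$.

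Next I would gather the needed estimates on $\mu_k$ and $\alpha_k$. The bounds $\EE(\chi_m)^2 \le m$ (Jensen) and $\EE(\chi_m)^2 \ge m - \tfrac{1}{2}$ (since $\mathrm{Var}(\chi_m) \le \tfrac{1}{2}$ for every $m \ge 1$) yield, for $\beta \ge 1$ and $1 \le k \le n-1$, $\sqrt{(n-k)/2} \le \mu_k \le \sqrt{n-k}$. Writing $\sqrt{n} - \sqrt{n-k} = k/(\sqrt{n}+\sqrt{n-k})$ then gives, for the interior indices $2 \le k \le n-1$, the two-sided estimate $k/(2\sqrt{n}) \le \alpha_k \le 3k/\sqrt{n}$. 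Because $\mu_0 = \mu_n = 0$, the two endpoint coefficients are anomalously large: $\alpha_1 = 2\sqrt{n} - \mu_1$ and $\alpha_n = 2\sqrt{n} - \mu_{n-1}$ both lie in $[\sqrt{n},\, 2\sqrt{n}]$.

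With these estimates, each of the two inequalities is routine bookkeeping. For $Q_1 \le a\, Q_2$: the interior diagonal sum obeys $\sum_{k=2}^{n-1}\alpha_k v_k^2 \le 3 \cdot (1/\sqrt{n})\sum_k k v_k^2$; the boundary diagonal obeys $\alpha_1 v_1^2 + \alpha_n v_n^2 \le 2\sqrt{n}(v_1^2 + v_n^2) \le 2\sqrt{n}\sum_{k=0}^n(v_{k+1}-v_k)^2$; and $\sum\mu_k(v_{k+1}-v_k)^2 \le \sqrt{n}\sum(v_{k+1}-v_k)^2$ from $\mu_k \le \sqrt{n}$. For $Q_1 \ge b\, Q_2$: the potential part of $Q_2$ is controlled by $(1/\sqrt{n})\sum k v_k^2 \le 2\sum\alpha_k v_k^2$ via $\alpha_k \ge k/(2\sqrt{n})$; the Dirichlet boundary pieces $\sqrt{n}(v_1^2 + v_n^2)$ are absorbed by $\alpha_1 v_1^2$ and $\alpha_n v_n^2$; and the interior gradient term of $Q_2$ is handled by splitting the sum at $k = n/2$, using $\mu_k \ge \sqrt{n}/2$ on $k \le n/2$, and on $k > n/2$ the crude bound $(v_{k+1}-v_k)^2 \le 2(v_k^2 + v_{k+1}^2)$ combined with $\alpha_k \ge \sqrt{n}/4$ to transfer the missing gradient mass onto the diagonal.

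The main obstacle, and the reason a naive term-by-term comparison fails, is that $\mu_k \sim \sqrt{n-k}$ is not uniformly comparable to the constant weight $\sqrt{n}$ appearing in the gradient part of $H_c(v)$: for $k$ near $n$ the gradient form of $H(v)$ is strictly weaker than that of $H_c(v)$. The argument goes through only because $\alpha_k$ grows from $\Theta(1/\sqrt{n})$ in the middle up to $\Theta(\sqrt{n})$ at the two endpoints -- precisely where $\mu_k$ vanishes due to the Dirichlet convention $\mu_0 = \mu_n = 0$ -- so that this diagonal reservoir absorbs both the Dirichlet boundary terms of $Q_2$ and the shortfall in the interior gradient near $k = n$. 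The hypothesis $\beta \ge 1$ enters only through the quantitative lower bound on $\mu_k$; relaxing it would force the constants $a, b$ to depend on $\beta$.
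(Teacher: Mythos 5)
Your argument takes essentially the same route as the paper. After cancelling the common random terms, both you and the paper reduce the lemma to comparing the deterministic part of $H(v)$ against a fixed Jacobi form, and both use the identity $2v_kv_{k+1} = v_k^2 + v_{k+1}^2 - (v_{k+1}-v_k)^2$ to effect the rewrite; your $Q_1(v)$, with diagonal $\alpha_k = 2\sqrt n - \mu_k - \mu_{k-1}$, is the paper's $J(v) + \sqrt n(v_1^2+v_n^2)$ regrouped by site index rather than by edge. The moment bounds $\sqrt{r-1/2}\le \EE\chi_r\le\sqrt r$ and the split of the gradient sum at $k=n/2$ are likewise the paper's ingredients. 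Your $\alpha_k$ packaging is a tidy way to see that the endpoint weights $\alpha_1,\alpha_n\sim\sqrt n$ are precisely what absorbs the Dirichlet boundary terms $\sqrt n\,v_1^2$, $\sqrt n\,v_n^2$, but this is a cosmetic regrouping, not a different proof.

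There is one slip that must be corrected before the bookkeeping goes through. From the weakened bound $\mu_k \ge \sqrt{(n-k)/2}$ you cannot conclude $\alpha_k \le 3k/\sqrt n$: that requires $\sqrt n - \mu_k = O(k/\sqrt n)$, whereas $\sqrt n - \sqrt{(n-k)/2}$ is of order $\sqrt n$ for $k\ll n$ (take $n=100$, $k=2$: you get $10-7=3$, not $\le 0.6$). The cure is to not round the chi moment bound down. From $(\EE\chi_m)^2 \ge m-\frac12$ you already have
$$
\mu_k^2 \;=\; \frac1\beta\,\bigl(\EE\chi_{\beta(n-k)}\bigr)^2 \;\ge\; (n-k) - \frac{1}{2\beta} \;\ge\; n-k-\frac12 \qquad (\beta\ge1),
$$
whence
$$
\sqrt n - \mu_k \;\le\; \sqrt n - \sqrt{\,n-k-\tfrac12\,} \;=\; \frac{k+\frac12}{\sqrt n + \sqrt{n-k-\frac12}} \;\le\; \frac{2k}{\sqrt n}\,,
$$
which is exactly the paper's estimate $\sqrt n - \mu_k \le 4k/\sqrt n$. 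With this in place $\alpha_k \le 4k/\sqrt n$ for interior $k$, and the remainder of your comparison, including the absorption of the boundary pieces by $\alpha_1,\alpha_n$ and the split at $n/2$ for the gradient, goes through as you describe.
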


We defer the proof until the end of the section, after a description of the allied $L_\beta$ set-up.  The point
of Lemma \ref{Hc} should be clear.  For instance, for an upper bound on the first probability in (\ref{inH}) one may replace
$H$ by $H_b$ with any sufficiently small $b >0$, and so on. 

The model for $L_{\beta}$ is as follows.
For $\kappa > n-1$, introduce the random bidiagonal matrix
\begin{equation*}
\label{Wmatrix} B_{\beta} =  \frac{1}{\sqrt{\beta}} \left[
\begin{array}{ccccc}
   {\chi}_{\beta \kappa} & &&& \\
\widetilde{\chi}_{\beta(n-1)} &   {\chi}_{\beta(\kappa -1)} &   & &\\
&\ddots & \ddots &  & \\
& & \widetilde{\chi}_{\beta 2} & {\chi}_{\beta( \kappa - n+2)} &    \\
& & & \widetilde{\chi}_{\beta} & {\chi}_{\beta( \kappa - n+1)}  \\
 \end{array}\right],
\end{equation*}
with the same definition for the $\chi$'s and again all variables independent.  (The
use of $\widetilde{\chi}$ is meant to emphasize this independence between the  diagonals.) 
Now \cite{DE} shows that it is the eigenvalues of $L_{\beta} =  (B_{\beta}) (B_{\beta})^{T}$ 
which have
the required joint density.\footnote{Once again, at $\beta=1,2$ this connection had
been noted previously (via Householder), see \cite{Silv} for example.} Note that $L_{\beta}$ does not
have independent entries.

Similar to before, we define
\begin{eqnarray*}
    \sqrt{\kappa}  L(v) & =  & v^T  \big ( L_{\beta}  -  (\sqrt{\kappa}+  \sqrt{n})^2 I_n  \big )  v  \\
               & = &   \frac{1}{\beta} \sum_{k=1}^n  \chi _{\beta(\kappa-k+1)}^2 v_k^2 
                 +  \frac{1}{\beta} \sum_{k=2}^n {\widetilde \chi}_{\beta(n -k+1)} ^2 v_k^2  \\
                 &  &          +  \frac{2}{\beta} \sum_{k=1}^{n-1} \chi _{\beta(\kappa-k+1)} {\widetilde \chi}_{\beta(n -k)} v_k v_{k+1}
                       - ( \sqrt \kappa + \sqrt n )^2 \sum_{k=1}^n v_k^2.              
\end{eqnarray*}
The added normalization by $\sqrt{\kappa}$ makes for better comparison with the Hermite case. With this, and
since $\kappa > n-1$, to prove Theorem 2 is to establish bounds on the following analogs of (\ref{inH}):
\begin{equation}
\label{inL}
   \PP \Bigl( \sup_{||v||_2 = 1} L(v) \ge \sqrt{n} \eps \Bigr)  \  \   \mbox{   and   } 
   \   \  \PP  \Bigl( \sup_{||v||_2 = 1} L(v) \le - \sqrt{n} \eps \Bigr)
\end{equation}
Finally, we state the Laguerre version of Lemma \ref{Hc}. (We prove only the latter as they are much the same).

\begin{lemma}
\label{Lc} For $c > 0$ set
\begin{eqnarray*}
  L_c(v) &  = &   \frac{1}{\sqrt{\beta}} \sum_{k=1}^n Z_k v_k^2   +  \frac{1}{\sqrt{\beta}}  \sum_{k=2}^n {\widetilde Z}_k v_k^2  
                            +  \frac{2}{\sqrt{\beta}} \sum_{k=1}^{n-1} Y_k v_k v_{k+1} \\
             & & - c \sqrt n \sum_{k=0}^n (v_{k+1}-v_k)^2  - {c \over \sqrt n} \sum_{k=1}^n k v_k^2, 
\end{eqnarray*}
where
\begin{eqnarray}
\label{LagNoise}
 & & Z_k = {1 \over \sqrt{\beta \kappa}  } \, \big ( \chi _{\beta(\kappa-k+1)}^2 -  \beta(\kappa-k+1) \big ),  \  \ 
  {\widetilde Z}_k = {1 \over \sqrt{\beta \kappa}  } \, \big ( {\widetilde \chi}^2_{\beta(n -k+1)} 
              - \beta(n -k+1) \big ),  \nonumber  \\
       &&   \mbox{    and   } Y_k =  {1 \over \sqrt{ \beta \kappa } } \big (   { \chi}_{\beta(\kappa  -k+1)} 
                     {\widetilde \chi}_{\beta(n-k)} 
                     - \EE [ {\chi} _{\beta(\kappa  -k+1)}   {\widetilde \chi}_{\beta(n-k)} ] \big ).  
\end{eqnarray}
Then, for all $\beta \ge 1$ there are constants $ a > b > 0$ so that $L_a(v) \le L(v) \le L_b(v)$ for all $v \in\RR^n$.
\end{lemma}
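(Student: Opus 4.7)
My plan is to write $L(v) = N(v) + D(v)$ where $N(v)$ is exactly the random part of $L_c(v)$ (the $Z, \widetilde Z, Y$ sums) and $D(v)$ is the deterministic residual obtained by substituting $\chi^2_{\beta(\kappa-k+1)}/(\beta\sqrt\kappa) = (\kappa-k+1)/\sqrt\kappa + Z_k/\sqrt\beta$ (and its tilde and cross-term analogues) into the explicit formula for $\sqrt\kappa\,L(v)$. Proving $L_a(v)\le L(v)\le L_b(v)$ then reduces to the deterministic sandwich
$$-a\sqrt n\sum_{k=0}^n(v_{k+1}-v_k)^2 - \frac{a}{\sqrt n}\sum_{k=1}^n k\,v_k^2 \;\le\; D(v) \;\le\; -b\sqrt n\sum_{k=0}^n(v_{k+1}-v_k)^2 - \frac{b}{\sqrt n}\sum_{k=1}^n k\,v_k^2,$$
to be established for numerical constants $a>b>0$, uniformly in $\kappa\ge n-1$ and $\beta\ge 1$. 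The standard uniform estimate $0\le \sqrt m - \EE\chi_m \le C/\sqrt m$ (valid for $m\ge1$) lets me replace $\EE[\chi_{\beta(\kappa-k+1)}]\EE[\chi_{\beta(n-k)}]/\beta$ in the cross term by $\sqrt{(\kappa-k+1)(n-k)}$, at the cost of an $O(1)$-per-index error whose sum, via $2|v_k v_{k+1}|\le v_k^2+v_{k+1}^2$ and $\kappa\ge n-1$, is easily absorbed into a small multiple of the target bounds; the hypothesis $\beta\ge 1$ is what keeps these errors uniformly controlled.

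The algebraic heart is to recognise the leading-order $D(v)$ through the completed-square identity
$$\sum_{k=1}^{n-1}\bigl(p_k v_k + q_{k+1}v_{k+1}\bigr)^2 = \sum_{k=1}^{n-1} p_k^2 v_k^2 + \sum_{k=2}^n q_k^2 v_k^2 + 2\sum_{k=1}^{n-1} p_k q_{k+1}\, v_k v_{k+1}$$
with $p_k = \sqrt{\kappa-k+1}$ and $q_k = \sqrt{n-k+1}$. Applying $2p_k q_{k+1}v_k v_{k+1} = p_k q_{k+1}(v_k^2+v_{k+1}^2) - p_k q_{k+1}(v_{k+1}-v_k)^2$ and combining with the diagonal terms and the subtracted $(\sqrt\kappa+\sqrt n)^2\|v\|^2/\sqrt\kappa$, one finds a Laplacian coefficient $-p_k q_{k+1}/\sqrt\kappa = -\sqrt{(\kappa-k+1)(n-k)/\kappa}$, comparable to $-\sqrt{n-k}$ under $\kappa\ge n-1$, together with a $v_k^2$-coefficient of the shape $-2(\sqrt n - \sqrt{n-k}) + O(k/\sqrt\kappa)$ (plus mild boundary corrections at $k=1, n$ from the leftover term $p_n^2 v_n^2 = (\kappa-n+1)v_n^2$ and the missing $\widetilde Z_1$). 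Since $\sqrt n-\sqrt{n-k} = k/(\sqrt n + \sqrt{n-k}) \in [k/(2\sqrt n), k/\sqrt n]$, the diagonal coefficient is of exact order $-k/\sqrt n$, matching the desired linear potential.

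The final step, and the main bookkeeping obstacle, is to reconcile the Laplacian coefficient $-\sqrt{n-k}$ with the target $-\sqrt n$ for indices $k$ close to $n$, where $\sqrt{n-k}$ degenerates. For $k \le (1-\delta) n$ with a small fixed $\delta > 0$ the bound $-\sqrt{n-k}\le -\sqrt\delta\,\sqrt n$ is direct; for $k > (1-\delta)n$ the missing Laplacian strength $O(\sqrt n)(v_{k+1}-v_k)^2$ is absorbed by the (now large) linear potential $-(k/\sqrt n)v_k^2 \ge -\sqrt n\, v_k^2$ via $(v_{k+1}-v_k)^2 \le 2(v_k^2 + v_{k+1}^2)$. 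Balancing these absorbed contributions against the available diagonal negativity fixes a workable pair $a > b > 0$ (explicit computation shows $b < 2/7$ suffices for the upper bound, with $a$ correspondingly large for the lower one). The difficulty is not the algebra but the uniform tracking of the various small errors — from $\EE\chi_m\ne\sqrt m$, from $\sqrt{(\kappa-k+1)/\kappa} = 1 + O(k/\kappa)$ in the square-completion step, and from the endpoint indices — simultaneously for all $\kappa\ge n-1$ and all $\beta\ge 1$.
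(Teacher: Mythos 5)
Your proposal is correct and follows essentially the same strategy as the paper's proof of the Hermite version (Lemma \ref{Hc}), which the paper asserts carries over to the Laguerre case without writing out the details: isolate the centered noise, expand the deterministic residual by decomposing the cross term $2\sqrt{(\kappa-k+1)(n-k)}\,v_kv_{k+1}$ into a Laplacian piece and a diagonal piece, use the $\chi$-moment estimates to compare coefficients with the target form, and absorb the degenerate Laplacian contribution near $k=n$ into the then-dominant linear potential (exactly the role the split at $k \gtrless n/2$ plays in the paper's Hermite argument). Your square-completion identity and boundary bookkeeping are a reasonable way to organize the added algebraic complexity introduced by the two diagonals and the $(\sqrt\kappa+\sqrt n)^2$ centering.
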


\begin{proof}[Proof of Lemma \ref{Hc}] 
Writing,
\begin{eqnarray*}
H(v) &=  & \frac{1}{\sqrt{\beta}} \sum_{k=1}^n g_k v^2_k 
                               + \frac{2}{\sqrt{\beta}} \sum_{k=1}^{n-1} \big ( \chi _{\beta(n-k)} 
                               - \EE [\chi_{\beta(n-k)}] \big ) v_k v_{k+1} \\
           & &          - \sum_{k=1}^{n-1} \EE [ \frac{\chi _{\beta(n-k)}}{\sqrt{\beta}} ] (v_{k+1} - v_k)^2  \\
                      & &  - \sum_{k=1}^{n-1} \big ( \sqrt n -  \EE [ \frac{\chi _{\beta(n-k)}}{\sqrt{\beta}} ] \big) (v_k^2+v_{k+1}^2)
                               - \sqrt n \,( v_1^2 +  v_n^2 ) 
\end{eqnarray*}
shows it is enough to compare, for every $v$,
$$ I(v) = \sqrt n \sum_{k=1}^{n-1} (v_{k+1} - v_k)^2  +  {1 \over \sqrt n}\sum_{k=1}^n k v^2_k $$
and
$$ 
 J(v) =    \sum_{k=1}^{n-1}    \EE [ \frac{\chi _{\beta(n-k)}}{\sqrt{\beta}}] (v_{k+1} - v_k)^2 
                       + \sum_{k=1}^{n-1} \big ( \sqrt n -   \EE [\frac{\chi_{\beta(n-k)}}{\sqrt{\beta}}] \big ) (v_k^2+v_{k+1}^2).
$$ 
For this, there is the formula
$$
     \EE \chi_r = 2^{1/2} {  \Gamma (r/2 + 1/2) \over \Gamma (r/2)} \, ,
$$
and, if $r \ge 1$, also the  bounds
$$
          \sqrt{r - 1/2 } \le \EE \chi_r \le  \sqrt{r}.
$$
(The upper bound is simply Jensen's inequality and holds for all $r >0$; the lower bound
requires more work.)  This translates to
$$
     {k \over 2 \sqrt n} \leq  \sqrt n -  \frac{1}{\sqrt{\beta}} \, \EE \chi_{\beta(n-k)}  \leq  {4 k \over \sqrt n} \, , 
$$
for $k \le n-1$, $\beta \ge 1$.
Hence, $J(v) \leq 8 I(v)$ for every $v$.
Conversely, if $k \leq n/2$, $ \EE [ \chi_{\beta(n-k)}/\sqrt{\beta}] \geq  \sqrt{n}/4 $
while if $k \geq n/2$, 
$ \sqrt n - \EE [ \chi_{\beta(n-k)}/\sqrt{\beta}] \geq  {\sqrt{n}}/4$. It follows that
$J(v) \geq  I(v)/16$. 
\end{proof}

\section{Upper Bounds} 

Theorems \ref{Thm-Hermite} and \ref{Thm-Laguerre} are proved, first for the 
$\beta$-Hermite case with all details present;  a second subsection explains the modifications
required for the $\beta$-Laguerre case.  The proof of Corollary \ref{Cor-VarianceBound} appears at the end.

\subsection{Hermite ensembles}

\paragraph{Right-tail.}  This is the more elaborate of the two.  The following is a streamlined version of what is needed.

\begin{proposition}  
\label{Prop-IntUp}
Consider the model quadratic form,
\begin{equation}
\label{modelform}
   H_b(v, z) =    \frac{1}{\sqrt{\beta}} \sum_{k=1}^n z_k v_k^2  - b \sqrt{n}  \sum_{k=0}^n (v_{k+1} - v_k)^2 - \frac{b}{\sqrt{n}}  \sum_{k=0}^n k v_k^2, 
\end{equation}
for fixed $b > 0$ and independent mean-zero random variables $\{z_k \}_{k =1, \dots, n}$ satisfying the uniform tail bound 
$ \EE [ e^{\lambda z_k} ]  \le e^{c\lambda^2}$ for all $\lambda \in \RR$ and some $c >0$.  There is a $C = C(b,c)$ so that
$$
       \PP \Big ( \sup_{||v||_2 = 1}  H_b(v, z)  \ge \eps \sqrt{n} \Big) 
       \le {(1- e^{- \beta /C})^{-1} } e^{ - \beta n \eps^{3/2}/C}
  $$
for all $\eps \in (0, 1]$ and $n \ge 1$.
\end{proposition}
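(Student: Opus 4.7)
My plan is to recognize $\sup_{\|v\|_2=1} H_b(v,z)$ as the top eigenvalue of the random tridiagonal matrix $M = D + A$, where $D = \mathrm{diag}(z_1/\sqrt\beta,\ldots,z_n/\sqrt\beta)$ carries the randomness and $A$ is the deterministic negative operator associated with the two penalty quadratic forms. The operator $-A$ is a discrete Airy operator on $\{1,\ldots,n\}$ whose eigenvalues are well-approximated by $bn^{-1/6}\alpha_j$ (with $\alpha_j \sim j^{2/3}$ the Airy zeros), so the proposition is the finite-$n$ analog of the right-tail bound $\PP(TW_\beta \ge t) \lesssim \exp(-\tfrac23 \beta t^{3/2})$ that one reads off from the continuum variational description of RRV. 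My strategy is to discretize that tail estimate.

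I would first establish a single-vector estimate. For fixed unit $v$ with $v_0 = 0$, the subgaussian hypothesis gives
$$\PP\Bigl(\tfrac{1}{\sqrt\beta}\textstyle\sum z_k v_k^2 \ge s\Bigr) \le \exp\bigl(-\beta s^2/(4c\textstyle\sum v_k^4)\bigr).$$
The key input is a discrete Sobolev inequality: telescoping $v_k^2 = \sum_{j<k}(v_{j+1}-v_j)(v_{j+1}+v_j)$ with Cauchy--Schwarz gives $\max_k v_k^2 \le 2 D(v)^{1/2}$ for $D(v) = \sum (v_{j+1}-v_j)^2$, so $\sum v_k^4 \le 2 D(v)^{1/2} \le 2(\mathrm{pen}(v)/(b\sqrt n))^{1/2}$ with $\mathrm{pen}(v)$ the full penalty. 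If $H_b(v,z) \ge \eps\sqrt n$, the random part must exceed $\eps\sqrt n + \mathrm{pen}(v)$, yielding a per-$v$ bound
$$\exp\bigl(-\beta b^{1/2}n^{1/4}(\eps\sqrt n + p)^2/(Cp^{1/2})\bigr),\qquad p = \mathrm{pen}(v).$$
The minimum in $p$ is attained near $p \asymp \eps\sqrt n$ and delivers exactly the target exponent $\exp(-\beta n \eps^{3/2}/C)$.

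The main obstacle is passing from the per-$v$ estimate to the supremum. I plan a dyadic peeling of the penalty combined with an $L^2$-net. Decompose the sphere as $\mathcal V_j = \{v: \mathrm{pen}(v) \in [2^{j-1}\eps\sqrt n,\,2^j\eps\sqrt n)\}$, $j\ge 0$, together with a low-penalty class handled analogously; on $\mathcal V_j$ the per-$v$ bound becomes $\exp(-\beta n\eps^{3/2}\,2^{3j/2}/C)$. Cover $\mathcal V_j$ in $L^2$ at resolution $\delta$: by the ellipsoidal structure dictated by the Airy-type spectrum of $-A$ and Weyl's count, the log covering number is of order $2^{3j/2}\eps^{3/2}n/(b^{3/2}\delta^3)$. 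Within a net cell $|H_b(v,z)-H_b(v',z)|$ is controlled via a separate high-probability estimate $\|M\|_{\rm op}\le C\sqrt n$ (using $\|A\|_{\rm op}=O(\sqrt n)$ and a subgaussian bound on $\max_k|z_k|$), so $\delta$ may be taken to be a small constant depending only on $b$ and $c$.

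The union bound inside each $\mathcal V_j$ then gives failure probability at most $\exp(C'\,2^{3j/2}\eps^{3/2}n/b^{3/2}) \cdot \exp(-\beta n\eps^{3/2}\,2^{3j/2}/C)$. The hypothesis $\beta \ge 1$ is precisely what lets one choose $C$ (as a function of $b,c$) large enough to absorb the entropy into the $\beta$-factor, leaving $\exp(-\beta n\eps^{3/2}\,2^{3j/2}/(2C))$ for each dyadic class. Summing over $j \ge 0$ produces a geometric series of the form $\sum_j e^{-\beta\,2^{3j/2}/C'}$, bounded by $(1-e^{-\beta/C})^{-1}e^{-\beta n\eps^{3/2}/C}$, which matches the stated prefactor. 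The most delicate point is this interplay between the Weyl-type entropy count and the concentration exponent; one must ensure that neither the dimension of the relevant ellipsoidal slice nor the Lipschitz cost of the net eats into the $\beta$-gain of the single-$v$ bound, and this is where the discreteness of the problem, compared to RRV's continuum Riccati argument, needs the most care.
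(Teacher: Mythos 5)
Your strategy is genuinely different from the paper's: you aim for a per-vector subgaussian tail (via the discrete Sobolev bound $\sum_k v_k^4 \le 2(\mathrm{pen}(v)/b\sqrt n)^{1/2}$), then pass to the supremum by dyadic peeling of the penalty plus an $L^2$-net and a Weyl entropy count. The per-vector estimate is correct and the exponent optimization over $p$ is right; the Weyl count $N_j \sim 2^{3j/2}\eps^{3/2}n/b^{3/2}$ is also essentially correct. The paper instead never localizes in $v$: it applies a deterministic Abel-summation identity (Lemma~\ref{parts}) with block size $m=[\eps^{-1/2}]$ and Cauchy--Schwarz with $\lambda=b\sqrt n$, which reduces $\sup_{\|v\|_2=1}H_b(v,z)$ pointwise to a maximum over coordinates $k$ of quantities $\Delta_m(k)$; the supremum over the sphere then collapses to a union bound over $O(n/m)$ blocks with no entropy cost at all.

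The gap in your argument is the step ``$\delta$ may be taken to be a small constant depending only on $b$ and $c$.'' With Lipschitz constant $\|M\|_{\mathrm{op}}=O(\sqrt n)$, the discretization error on a $\delta$-net is $O(\sqrt n\,\delta)$, which must be $\lesssim\eps\sqrt n$; hence $\delta\lesssim\eps$, not a constant. (Even restricting to $\mathcal V_j$ and using $v^T(-A)v\le 2^j\eps\sqrt n$ only improves this to $\delta\lesssim\eps^{1/2}$.) The log covering number of the roughly $N_j$-dimensional slice at scale $\delta$ is $N_j\log(1/\delta)$ --- incidentally, not $N_j/\delta^3$ as you wrote --- so the union bound contributes $\exp\bigl(CN_j\log(1/\eps)\bigr)$ against the concentration gain $\exp\bigl(-\beta n\eps^{3/2}2^{3j/2}/C\bigr)$. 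Since $N_j\sim 2^{3j/2}\eps^{3/2}n$, the two terms are of the same order in $(n,\eps,j)$ and the balance requires $\log(1/\eps)\lesssim\beta$. But in the nontrivial range $\eps^{3/2}n\ge 1$, $\eps$ can be as small as $n^{-2/3}$, so $\log(1/\eps)$ can be of order $\log n$, and $\beta\ge 1$ does not absorb it. A single-scale net therefore does not close; one would need a genuine chaining argument (or the paper's integration-by-parts reduction) to remove the $\log(1/\eps)$ and obtain the clean exponent $\exp(-\beta n\eps^{3/2}/C)$.
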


The proof of the above hinges on the following version of integration by parts (as in fact
does the basic convergence result in \cite{RRV}). 

\begin{lemma}  
\label{parts}
Let $s_1, s_2, \ldots, s_k, \ldots$ be real numbers, and set
$S_k = \sum_{\ell = 1}^k s_\ell$, $S_0=0$. 
Let further $t_1, \ldots, t_n$ be real numbers, $t_0= t_{n+1} = 0$. Then,
for every integer $m\geq 1$,
$$ \sum_{k=1}^n s_k t_k = {1\over m} \sum_{k=1}^n   [S_{k+m-1} - S_{k-1} ]  t_k
                       +  \sum_{k=0}^n  \bigg ( {1\over m} \sum_{\ell =k}^{k+m-1} [S_\ell - S_k]
                             \bigg )   (t_{k+1}-t_k).$$
\end{lemma}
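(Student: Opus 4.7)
The identity is a windowed generalization of Abel's summation-by-parts: for $m=1$ the second sum on the right vanishes and both sides trivially equal $\sum_{k=1}^n s_k t_k$. My plan is to reduce the general case back to classical Abel rearrangement by introducing a ``smoothed'' sequence and computing its discrete increment, then matching things term by term.

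Set $a_k = \frac{1}{m}\sum_{\ell=k}^{k+m-1}(S_\ell - S_k)$, so that the second sum on the right-hand side of the claimed identity is exactly $\sum_{k=0}^n a_k (t_{k+1} - t_k)$. Because $t_0 = t_{n+1} = 0$, the standard summation-by-parts gives
\[
\sum_{k=0}^n a_k (t_{k+1} - t_k) \;=\; -\sum_{k=1}^n (a_k - a_{k-1})\, t_k,
\]
with no surviving boundary contribution.

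Next compute the discrete increment $a_k - a_{k-1}$. After re-indexing the sum defining $a_{k-1}$ (shifting $\ell \mapsto \ell - 1$) so that both sums run over the same range $\ell \in \{k,\ldots,k+m-1\}$, each summand in the difference collapses telescopically to $(S_\ell - S_{\ell-1}) - (S_k - S_{k-1}) = s_\ell - s_k$. Averaging over this $m$-term window yields
\[
a_k - a_{k-1} \;=\; \frac{1}{m}\bigl[S_{k+m-1} - S_{k-1}\bigr] - s_k.
\]

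Substituting this back into the Abel identity above, the right-hand side of the lemma becomes
\[
\frac{1}{m}\sum_{k=1}^n \bigl[S_{k+m-1} - S_{k-1}\bigr] t_k \;-\; \sum_{k=1}^n \Bigl(\frac{1}{m}\bigl[S_{k+m-1} - S_{k-1}\bigr] - s_k\Bigr) t_k \;=\; \sum_{k=1}^n s_k t_k,
\]
which is the claim. The only real obstacle is bookkeeping: keeping the $\ell \mapsto \ell - 1$ shift and the empty-sum convention (the $\ell = k$ summand contributes $0$) straight. The vanishing of both $t_0$ and $t_{n+1}$ is precisely what guarantees that no boundary terms appear in the Abel step, so the proof reduces to the telescoping computation above.
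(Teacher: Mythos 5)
Your proof is correct, and it is essentially the same argument as the paper's: both hinge on Abel summation-by-parts combined with a telescoping computation of a discrete increment. The paper organizes it slightly differently — it first rewrites $\sum s_k t_k = -\sum_{k=0}^n S_k(t_{k+1}-t_k)$, then inserts an \emph{arbitrary} auxiliary sequence $T_k$ to obtain the general identity $\sum_{k=1}^n s_k t_k = \sum_{k=0}^n [T_k - S_k](t_{k+1}-t_k) + \sum_{k=1}^n [T_k - T_{k-1}]t_k$, and only at the end specializes $T_k = \frac{1}{m}\sum_{\ell=k}^{k+m-1}S_\ell$ (so that $T_k - S_k = a_k$ and $T_k - T_{k-1} = \frac{1}{m}[S_{k+m-1}-S_{k-1}]$); you instead start from the specific $a_k$, apply Abel to $\sum a_k(t_{k+1}-t_k)$, and verify the increment $a_k - a_{k-1}$ cancels $\frac{1}{m}[S_{k+m-1}-S_{k-1}] - s_k$. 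These are the same telescoping identity read in opposite directions; the paper's presentation makes explicit that any choice of smoother $T_k$ would yield an analogous decomposition, which is a small gain in generality but not needed for the lemma.
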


\begin{proof} For any $T_k$, $k = 0, 1, \ldots, n$, write
\begin{eqnarray*} 
 \sum_{k=1}^n s_k t_k
        & = &  \sum_{k=1}^n S_k (t_{k}-t_{k+1}) \\
        & =  & \sum_{k=0}^n [T_k - S_k] (t_{k+1}-t_k) - \sum_{k=0}^n T_k  (t_{k+1}-t_k) \\
        & =  & \sum_{k=0}^n [T_k - S_k] (t_{k+1}-t_k)  + \sum_{k=1}^n [T_k - T_{k-1}]  t_k . 
\end{eqnarray*}
Conclude by choosing $T_k =  {1 \over m} \sum_{\ell =k}^{k+m-1} S_\ell $, $k = 0, 1, \ldots, n$. 
\end{proof}

\begin{proof}[Proof of Proposition \ref{Prop-IntUp}]
Applying Lemma \ref{parts} with $s_k =  z_k$ and $t_k = v_k^2$ (bearing in mind that 
$v_0=v_{n+1} = 0$, and we are free to set $s_k =0$ 
 for $k \ge n+1$) yields
\begin{eqnarray*}
  \sum_{k=1}^n z_k v_k^2 
      & \leq  & \frac{1}{m } \sum_{k=1}^n | S_{k+m-1} - S_{k-1}| v_k^2 
           + \sum_{k=0}^n \Bigl( \frac{1}{m} \sum_{\ell = k}^{k+m-1} |S_\ell - S_k | \Bigr)
                            |v_{k+1}^2 - v_k^2|  \\
          & \leq &     \frac{1}{m}  \sum_{k=1}^n  \Delta_m(k-1) v_k^2
          + \sum_{k=0}^n \Delta_m(k)  |v_{k+1}+v_k| |v_{k+1} - v_k| 
\end{eqnarray*}
where 
\begin{equation}
    \Delta_m(k) = \max_{k + 1 \leq \ell \leq k+m} |S_\ell - S_k| ,  \  \mbox{   for   }   k = 0, \ldots, n.          
\end{equation}
Next, by the Cauchy-Schwarz inequality, for every $\lambda >0$,
$$  \frac{1}{\sqrt{\beta}} \sum_{k=1}^n z_k v_k^2 
     \leq   \frac{1}{m \sqrt{\beta}} \sum_{k=1}^n  \Delta_m(k-1) v_k^2
              + \lambda \sum_{k=0}^n (v_{k+1}-v_k)^2 
         + \frac{1}{4\lambda \beta} \sum_{k=0}^n \Delta_m(k)^2  (v_{k+1}+v_k)^2. $$
Choosing $\lambda = b \sqrt n$ we obtain
\begin{equation}
\label{bound1}
\sup_{||v||_2 =1} H_b(z, v)   \leq  \max_{1\leq k\leq n} \Bigl( \frac{1}{m \sqrt{\beta}} \, \Delta_m(k-1) 
    + \frac{1}{ 2 b \sqrt{n} \beta} \, \bigl[ \Delta_m(k-1)^2 + \Delta_m(k)^2 \bigr]
                -  b \frac{ k}{\sqrt n} \Bigr).
\end{equation}
And since whenever $(j-1)m + 1 \leq k\leq jm$, $1 \leq j \leq [n/m] +1$, it holds
$$ 
\Delta_m (k) \vee \Delta_m(k-1) \leq  2 \Delta_{2m} \big ( (j-1)m  \big),
$$
we may recast (\ref{bound1}) as in
\begin{eqnarray*}
\label{bound2}
\lefteqn{
\sup_{||v||_2 =1} H_b(z, v) } \\
& \leq &  \max_{1\leq j\leq [n/m] +1} \bigg ( {2\over m \sqrt{\beta} }\, \Delta_{2m}\big ( (j-1)m  \big) 
     + {4 \over b \sqrt{n} \beta} \,  \Delta_{2m}\big ( (j-1)m  \big)^2 
                - b \, {(j-1)m + 1 \over \sqrt n} \bigg ).  \nonumber                
\end{eqnarray*}

Continuing requires a tail bound on $\Delta_{2m}(J)$ for integer $J \ge 0$.  
By Doob's maximal inequality and our assumptions on $z_k$, for every $ \lambda> 0$ and $t  >0$,
$$  \PP \Bigl(  \max_{ 1 \leq \ell \leq 2m} S_\ell \geq t \Bigr)
         \leq  e^{- \lambda t} \, \EE \left[ e^{\lambda S_{2m}} \right]
       \leq  e^{- \lambda t +  2 c m \lambda ^2 } . 
  $$
Optimizing in $\lambda $, and then applying the same reasoning to the 
sequence $-S_\ell$ produces
$$ 
 \PP \Bigl( \max_{ 1 \leq \ell \leq 2m} | S_\ell | \geq t \Bigr) \leq  2 \, e^{-t^2/8cm}.
$$
Hence,
\begin{equation}
\label{bound3}
\PP \Bigl( \Delta_{2m} (J) \geq t \Bigr) \leq  2 \, e^{- t^2/8cm}, 
\end{equation}
for all integers $m \geq 1$ and $J \geq 0$, and every $t >0$.

>From (\ref{bound3}) it follows that
\begin{eqnarray}
\label{bd1}
 \lefteqn{
     \PP \Bigl( \max_{1\leq j\leq [n/m] +1 }  \Big ( \frac{2}{ m \sqrt{\beta} } \,  \Delta_{2m} \bigl( (j-1)m \bigr)
                     - \frac{[b (j-1)m +  1]}{2 \sqrt{n}} \Big ) \geq  \frac{ \eps \sqrt{n}}{  2}  \Bigr) }  \\
             & \le & \sum_{j=1}^{[n/m] +1}   
                       \PP \Bigl(  \frac{2}{m \sqrt{\beta}} \, \Delta_{2m} \bigl( (j-1)m  \bigr) \geq 
                       \frac{b [(j-1)m + 1]}{ 2\sqrt n}  + \frac{\eps \sqrt{n}}{2} \bigg)  \nonumber \\
          & \le  & 2 \sum_{j=1}^{[n/m] +1}  \exp \bigg ( - \frac{\beta m}{ 128 c} 
          \Big [  {b [(j-1)m + 1] \over \sqrt n}  + \varepsilon \sqrt n \Big ] ^2 \bigg),  \nonumber
\end{eqnarray}          
and similarly
\begin{eqnarray}
\label{bd2}
\lefteqn{ \hspace{-1cm}
\PP \Bigl(  \max_{1\le j \le [n/m] +1}   \Bigl( {4\over b \sqrt{n} \beta }\,  \Delta_{2m} \left( (j-1)m  \right)^2
                     -  \frac{b [(j-1)m + 1]}{2\sqrt n} \Bigr) \geq \frac{\varepsilon \sqrt{n}}{ 2} \Bigr) } \\
                     & \le   &   2 \sum_{j=1}^{[n/m] +1}  \exp \bigg ( - \frac{\beta b \sqrt{n}}{64 c m} 
          \Big[ \frac{b [(j-1)m + 1]}{\sqrt{n}}  + \eps \sqrt n \Big ]  \bigg ).   \nonumber                               
\end{eqnarray} 
Combined, this reads
\begin{eqnarray}
\label{bound4}
\lefteqn{
\PP \Bigl( \sup_{||v||_2 =1}  H_b(z, v) \ge  \eps \sqrt{n} \Bigr) }   \\
& \le &  \Bigl( \frac{2}{1- e^{-\beta \eps b m^2/ 64 c} } \Bigl)  e^{- \beta m n \eps^2/128 c}  +  \Bigl(  \frac{2}{ 1- e^{-\beta b^2/64 c}}  \Bigr)
e^{- \beta b \eps n /64 cm},  \nonumber
\end{eqnarray}
which we have recorded in full for later use.  In any case, the choice
$m = [ \eps^{-1/2}]$ will now produce the claim.
\end{proof}

We may now dispense of the proof of Theorem \ref{Thm-Hermite} (Right-Tail).
Before turning to the proof, we remark that if $\eps > 1$, one may run through the above argument
and simply choose $m=1$ at the end to produce the classical form of the large deviation inequality
(\ref{largedev}) known previously for $\beta =1,2$.  

We turn to the values $0 < \eps \leq 1$. The form (\ref{Hb}) is split into two pieces,
$$
                H_b(v) = H_{b/2}(v, g)   +  \tilde{H}_{b/2}(v, \chi), 
$$
Proposition \ref{Prop-IntUp}  applying to each. 

The first term on the right is precisely of the form
(\ref{modelform}) with each $z_k$  an independent mean-zero Gaussian of
variance $2$, which obviously  satisfies the tail assumption with $c=1$.  
The second term, $\tilde{H}_{b/2}(v, \chi)$, is a bit different, having noise present through the
quantity $\sum_{k=1}^{n-1} (\chi_{\beta(n-k)} - \EE \chi_{\beta(n-k)} ) v_k v_{k+1}$.  But carrying
out the integration by parts on $t_k = v_k v_{k+1}$ (and  $s_k = \chi_{\beta(n-k)} - \EE \chi_{\beta(n-k)}$),
will produce a bound identical to (\ref{bound2}), with an additional factor of $2$ before each appearance
of $\Delta_{2m}$.   Thus, we will be finished granted the following bound.

\begin{lemma}
\label{ChiProp}
For $\chi$  a $\chi$ random variable with parameter greater than or equal to one,
\begin{equation}
\label{chitail}
  \EE [ e^{\lambda \chi} ] \le e^{\lambda \EE \chi + \lambda^2 /2},  \mbox{   for all } \lambda \in \RR.
\end{equation}
\end{lemma}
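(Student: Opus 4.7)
The plan hinges on strong log-concavity of the $\chi_r$ density: for $r \geq 1$, the density $f_r(x) \propto x^{r-1} e^{-x^2/2}$ on $(0,\infty)$ has potential $V(x) = - \log f_r(x)$ satisfying $V''(x) = 1 + (r-1)/x^2 \geq 1$. This is precisely the $1$-strongly log-concave regime in which a sub-Gaussian MGF bound with variance proxy $1$ is standard, and the lower threshold $r \geq 1$ enters exactly at the point where this convexity bound survives down to $x \downarrow 0$.

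To carry this out, I would set $\phi(\lambda) = \log \EE [e^{\lambda \chi_r}] - \lambda \EE \chi_r$, so that $\phi(0) = \phi'(0) = 0$ and (\ref{chitail}) amounts to $\phi(\lambda) \leq \lambda^2/2$ for every $\lambda \in \RR$. The key observation is that $\phi''(\lambda)$ equals the variance of $\chi_r$ under the exponentially tilted probability $d\mu_\lambda(x) \propto e^{\lambda x} f_r(x) \, dx$, whose potential $V_\lambda(x) = V(x) - \lambda x$ again satisfies $V_\lambda''(x) \geq 1$. By the Brascamp--Lieb variance inequality (equivalently, the Poincar\'e inequality for $1$-strongly log-concave measures on a convex subset of $\RR$), one has $\phi''(\lambda) \leq 1$ uniformly in $\lambda$, and integrating twice from $0$ yields the claim.

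An alternative, more self-contained route is available when $r$ is an integer: write $\chi_r \stackrel{d}{=} \|G\|_2$ for $G$ a standard Gaussian vector in $\RR^r$, and invoke the Borell--Sudakov--Tsirelson Gaussian concentration inequality applied to the $1$-Lipschitz function $\|\cdot\|_2$ (equivalently, Herbst's argument applied to the Gaussian log-Sobolev inequality). The main --- though rather minor --- obstacle in the general approach is that the tilted measure lives on the half-line $(0,\infty)$ rather than on all of $\RR$, which is handled routinely since Brascamp--Lieb holds on any convex subset of $\RR$ (one need not symmetrize). The fact that this clean bound persists up through the full range $r \geq 1$, and fails for $r < 1$, is traceable directly to the sign of $(r-1)/x^2$.
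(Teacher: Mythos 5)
Your proof is correct, but it proceeds by a genuinely different mechanism than the paper's. Set $\phi(\lambda)=\log\EE[e^{\lambda\chi}]-\lambda\EE\chi$, so the claim is $\phi(\lambda)\le\lambda^2/2$. You bound $\phi''(\lambda)$ --- the variance of $\chi$ under the exponentially tilted law --- by $1$ via Brascamp--Lieb (equivalently Bakry--\'Emery/Poincar\'e) for a $1$-strongly log-concave density on the half-line, and integrate twice. The paper differentiates only once: writing $\EE[e^{\lambda\chi}]=e^{\lambda^2/2}\int_0^\infty x^pe^{-(x-\lambda)^2/2}\,dx\big/\int_0^\infty x^pe^{-x^2/2}\,dx$ with $p=r-1$, the claim reduces to $\phi'(\lambda)\le\lambda$ for $\lambda>0$, and an integration by parts in $x$ turns that into the elementary mean comparison $\EE Y\le\EE X$, where $X$ and $Y$ have densities proportional to $x^{p-1}e^{-(x-\lambda)^2/2}$ and $x^{p-1}e^{-x^2/2}$ on $\RR_+$. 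Both routes rest on the same convexity $V''(x)=1+(r-1)/x^2\ge 1$, but at the first- versus second-derivative level; the paper's argument is more self-contained while yours packages the estimate via a standard functional inequality (and the tilted-variance bound $\phi''\le 1$ is genuinely stronger than the intermediate step $\phi'(\lambda)\le\lambda$ that the paper proves). The threshold $r\ge 1$ surfaces in both, though for superficially different reasons: for you via $V''\ge 1$, in the paper via the integrability requirement $q=p-1>-1$ on the comparison densities, with $p=0$ (i.e.\ $r=1$) treated separately by hand. Your integer-$r$ alternative through Gaussian concentration of $\|\cdot\|_2$ is exactly what the paper alludes to in its opening sentence before opting for the elementary derivation.
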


\begin{proof}[Proof of Lemma \ref{ChiProp}]
This may be viewed as a consequence of the dimension free concentration 
inequalities for norms of Gaussian vectors \cite{L0}, but here is an elementary derivation. 

A $\chi$ of parameter $r$ has density function $f(x) = c_r x^{r-1} e^{-x^2/2}$ on $\RR_+$, requiring 
us to show  that
$$
   \frac{ \int_0^{\infty} x^p e^{-(x-\lambda)^2/2} dx }{ \int_0^{\infty} x^p e^{-x^2/2} dx } 
   \le \exp \bigg( \lambda \, \frac{ \int_0^{\infty} x^{p+1} e^{-x^2/2} dx }{ \int_0^{\infty} x^{p} e^{-x^2/2} dx} \bigg)
$$
for any $p \ge 0$.   The case $p=0$ can be done by hand, and we neglect it here. Also, we will consider 
only $\lambda > 0$, things being quite the same for $\lambda < 0$.

Taking logarithms and then differentiating in $\lambda$, we find that the above inequality (for $p >0$) is implied by
\begin{eqnarray}
\label{stochdom}
0 & \ge &  \int_0^{\infty} \int_0^{\infty} (x - \lambda - y)  (xy)^p e^{-(x-\lambda)^2/2}  e^{-y^2/2 } \, dx dy \\
    &  = &  {p}  \int_0^{\infty} \int_0^{\infty} (x^{p-1} y^p - x^p y^{p-1} ) e^{-(x-\lambda)^2/2}  e^{-y^2/2 } \, dx dy. \nonumber
\end{eqnarray}
As for this, let $X$ and $Y$ be positive random variables with density functions  $c_{q, \lambda} x^q e^{-(x-\lambda)^2/2} $ and
$c_{q} y^{q} e^{-y^2/2}$ respectively. 
Now $q=p-1 > -1$, 
and we still have $\lambda >0$. 
It is easy to convince oneself that $\EE Y \le \EE X$, which is exactly the second line of 
(\ref{stochdom}).
\end{proof}

\medskip

\paragraph{Left-Tail.}  This demonstrates yet another advantage of the  variational
picture afforded by the tridiagonal models.  Namely, the bound may be achieved by a suitable choice of 
test vector
since
$$
   \PP \Bigl( \sup_{||v||_2 = 1} H_a(v) \le  - 2 C  \sqrt{n}  \eps \Bigr)  \le \PP \Bigl( H_a(v)  \le  - 2 C  \sqrt{n} \eps 
      \Norm{2}{v}^2  \Bigr)
$$
for whatever  $\{v_k\}_{k=1,\dots,n}$ on the right hand side.  (We have thrown in a constant $C$ for reasons that
will be clear in a moment.)  Simplifying, we write
\begin{eqnarray}
\label{leftbound1}
\lefteqn{   \PP \Bigl( H_a(v)  \le  - 2 C  \sqrt{n}  \eps \Norm{2}{v}^2  \Bigr) } \\ 
& \le & 
   \PP \Bigl( H_a(v, g)  \le  -  C  \sqrt{n} \eps \Norm{2}{v}^2  \Bigr)  + \PP \Bigl(  \chi(v)  \le - C   \sqrt{n}  \eps \Norm{2}{v}^2  \Bigr),
\nonumber
\end{eqnarray}
where in $H_a(v,g)$ we borrow the notation of Proposition \ref{Prop-IntUp} and 
$$
    \chi(v) = \frac{2}{\sqrt{\beta}}  \sum_{k=1}^{n-1} \big (\chi_{\beta(n-k)} - \EE \chi_{\beta(n-k)} \big) v_k v_{k+1}.
$$

Focus on the first term on the right of (\ref{leftbound1}), and note that
\begin{eqnarray}
\label{leftbound2}
\lefteqn{   \PP \Bigl( {H}_a(v, g)  \le  -  C \sqrt{n} \eps \Norm{2}{v}^2  \Bigr) } \\
&=& \PP \bigg(   \Big ( \frac{2}{\beta} \sum _{k=1}^n v_k^4 \Big )^{1/2}  \mathfrak{g} \ge
                     C \sqrt n  \eps \sum _{k=1}^n v_k^2  - a \sqrt n \sum _{k=0}^n (v_{k+1} - v_k)^2
                         -  {a \over \sqrt n} \sum _{k=1}^n k v_k^2 \bigg) \nonumber
\end{eqnarray}
with $\mathfrak{g}$ a single standard Gaussian.  Our choice of $v$ is motivated as follows. The event in question
asks for a large eigenvalue (think of $ \sqrt{n}\eps$ as large for a moment) of an operator which mimics
negative Laplacian plus potential.  The easiest way to accomplish this would be for the potential to remain large
on a relatively long interval, with a flat eigenvector taking advantage.  We choose
\begin{equation}
\label{choice1}
      v_k =  \frac{k}{n \eps} \wedge \Big( 1 - \frac{k}{n \eps} \Big)  \mbox{   for   } k \le n \eps 
       \mbox{   and zero otherwise},
\end{equation}
for which
\begin{equation}
\label{appraise}
   \sum _{k=1}^n v_k^2 \sim 
        \sum _{k=1}^n v_k^4 \sim n\eps ,  \  \  \    
          \sum _{k=0}^n (v_{k+1} - v_k)^2 \sim {1 \over  n \eps },  \ \mbox{   and   }
        \sum _{k=1}^n k v_k^2 \sim   n^2 \eps^2.
\end{equation}
(Here $ a \sim b$ indicates that the ratio $a/b$ is bounded above and below by numerical constants.)
Substitution into (\ref{leftbound2}) produces, for choice of $C = C(a)$ large enough inside the probability
on the left,
$$
      \PP \Bigl( {H}_a(v, g)  \le  -  C \sqrt{n} \eps \Norm{2}{v}^2  \Bigr)  \le  e^{- \beta n^2 \eps^{3} / C} \, \,  
      \mbox{   for   }  \, \,  n \eps^{3/2} \ge 1.
$$
The restriction of the range of $\eps$ stems from the gradient-squared term; it also ensures that $\eps n \ge 1$
which is required for our test vector to be sensible in the first place.

 Next, as a consequence of Proposition \ref{ChiProp} (see (\ref{chitail})) we have the bound: for $c > 0$,
\begin{equation}
\label{chibound}
   \PP \Bigl(   \chi(v) \leq   -  c^2   \Norm{2}{v}^2 \Bigr)
          \leq \exp \bigg ( -  \beta  c \bigg (\sum _{k=1}^n v_k^2 \bigg)^2
                   \bigg/ 8 \sum _{k=1}^{n-1} v_k^2 v_{k+1}^2 \bigg).
\end{equation}
With $c = C  \sqrt{n} \eps$ and $v$ as in (\ref{choice1}), this
may be further bounded by $e^{-\beta n^2 \eps^3/C}$. Here too we should assume that $n \eps^{3/2} \ge 1$.

Introducing a multiplicative constant of the 
advertised form $C^{\beta}$ extends the above bounds to the full range of $\eps$
in the most obvious way. 
Replacing $\eps$ with $\eps/C$
throughout completes the proof.

\subsection{Laguerre ensembles}

\paragraph{Right-Tail.}  We wish to apply the same ideas from the Hermite case to the Laguerre
form $L_b(v)$ (for small $b$). Recall:
\begin{eqnarray}
\label{Lb}
    L_b(v) & =&  \frac{1}{\sqrt{\beta}} \sum_{k=1}^n Z_k v_k^2 + \frac{1}{\sqrt{\beta}} \sum_{k=2}^n \tilde{Z}_k v_k^2  +
                 \frac{2}{\sqrt{\beta}} \sum_{k=1}^{n-1} Y_k v_{k+1} v_{k}  \\
                 &  &  - b \sqrt{  n} \sum_{k=0}^n (v_{k+1}- v_k)^2 -  {b}\frac{1}{\sqrt{n}} \sum_{k=1}^n
                           k v_k^2. \nonumber
\end{eqnarray}
Here, $Z_k,  \widetilde{Z}_k$ and $Y_k$ are as defined in (\ref{LagNoise}),
and the appropriate versions of the tail conditions for these variables (in order to apply Proposition  \ref{Prop-IntUp}) 
are contained in the next two lemmas. 

\begin{lemma} For $\chi$ be a $\chi $ random variable of positive parameter,
$$ 
   \EE [ e^{\lambda \chi^2} ] \le  e^{  \EE[ \chi^2 ] (  \lambda   + 2  \lambda^2 ) }   
   \mbox{   for all real   } \lambda < 1/4.
$$
\end{lemma}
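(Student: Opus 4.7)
The plan is to use the exact moment generating function of $\chi^2$ together with a deterministic scalar inequality. If $\chi$ has parameter $r > 0$, its density $c_r x^{r-1} e^{-x^2/2} \one_{x > 0}$ transforms under $y = x^2$ into the gamma density with shape $r/2$ and rate $1/2$, so the standard Laplace transform yields
$$
\EE [ e^{\lambda \chi^2} ] = (1 - 2\lambda)^{-r/2} \quad \mbox{for } \lambda < 1/2,
$$
while $\EE[\chi^2] = r$. Taking logarithms and dividing by $r$ reduces the claim to the deterministic bound
$$
-\frac{1}{2}\log(1 - 2\lambda) \le \lambda + 2\lambda^2 \quad \mbox{for all } \lambda < 1/4.
$$

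To verify this, set $f(\lambda) = \lambda + 2\lambda^2 + \frac{1}{2}\log(1 - 2\lambda)$. Direct differentiation gives $f(0) = 0$, $f'(0) = 0$, and $f''(\lambda) = 4 - 2/(1 - 2\lambda)^2$, so $f''$ is positive on $(-\infty, \lambda_*]$ and negative on $[\lambda_*, 1/4)$, where $\lambda_* = (1 - 1/\sqrt{2})/2$ is the unique root of $(1 - 2\lambda)^2 = 1/2$ in the relevant range.

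On the first interval, convexity combined with $f'(0) = 0$ forces the global minimum to occur at $\lambda = 0$ with value zero, hence $f \ge 0$ throughout. On $[\lambda_*, 1/4)$ the function $f$ is concave, strictly positive at the left endpoint by the previous step, and an explicit evaluation gives $f(1/4) = 3/8 - (\log 2)/2 > 0$; concavity then keeps $f$ above the chord joining these two positive values, closing the bound. The only mild subtlety is that the quadratic correction $2\lambda^2$ is not sharp near zero (the power series already gives $\lambda^2$) but is taken this large to absorb the blow-up of $-\frac{1}{2}\log(1 - 2\lambda)$ as $\lambda$ approaches $1/4$; this forces the proof to split along the inflection $\lambda_*$ rather than relying on global convexity.
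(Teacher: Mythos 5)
Your proof is correct and follows the same overall strategy as the paper: recognize $\chi^2$ as a gamma variable, compute the exact Laplace transform $\EE[e^{\lambda\chi^2}]=(1-2\lambda)^{-r/2}$ with $r=\EE[\chi^2]$, and reduce the claim to the scalar inequality $-\tfrac12\log(1-2\lambda)\le\lambda+2\lambda^2$ for $\lambda<1/4$. Where you diverge is in the verification of that scalar bound: the paper simply substitutes $x=-2\lambda$ into the elementary inequality $\log(1+x)\ge x-x^2$ valid for $x\ge-1/2$ (which is exactly the condition $\lambda\le 1/4$), whereas you run a second-derivative analysis of $f(\lambda)=\lambda+2\lambda^2+\tfrac12\log(1-2\lambda)$, locating the inflection $\lambda_*=(1-1/\sqrt{2})/2$ and arguing by convexity on $(-\infty,\lambda_*]$ and concavity on $[\lambda_*,1/4)$. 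Both verifications are sound (your endpoint checks $f(0)=f'(0)=0$, $f(\lambda_*)>0$, $f(1/4)=3/8-(\log 2)/2>0$ are all correct, and strict convexity on the first interval does give $f(\lambda_*)>0$ as claimed); the paper's substitution is considerably shorter, but your calculus route makes visible why the coefficient on $\lambda^2$ must be taken as $2$ rather than the Taylor value $1$ — namely to beat the concave blow-up of $-\tfrac12\log(1-2\lambda)$ as $\lambda\uparrow 1/4$.
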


\begin{proof}
 With  $r   = \EE[ \chi^2 ] > 0$ and  $\lambda < {1\over 2}$,
$$ \EE [ e^{\lambda \chi^2} ] 
         = \Big ( {1 \over 1-2\lambda } \Big )^{r /2} . $$
Now, since $x \geq -{1\over 2}$ implies $\log (1+x) \geq x - x^2 $,  for
any $ \lambda \leq  {1\over 4}$ the right hand side of the above is less  
$  e^{ r (\lambda   + 2  \lambda ^2)} $
as claimed.      
\end{proof}

\begin{lemma}
\label{ProdProp}
 Let $\chi $ and ${\widetilde \chi} $ be independent $\chi  $
random variables, each of parameter larger than one. Then, for every $\lambda \in \RR$ such that $ | \lambda | < 1$, 
$$ \EE \Bigl[e^{\lambda (\chi  {\widetilde \chi} - \EE [ \chi {\widetilde \chi}] ) } \Bigr]
     \leq  {1 \over \sqrt { 1-\lambda ^2}} \, \exp \Big ( {\lambda ^2 \over 2(1-\lambda ^2)}
               \bigl[  \EE [\chi ]^2  + \EE [{\widetilde \chi}]^2  
                   + 2 \lambda  \EE [\chi ]  \EE [{\widetilde \chi}]  \bigr] \Big ) . $$
\end{lemma}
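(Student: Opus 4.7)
My plan is to peel off the two chi factors one at a time, each time invoking Lemma \ref{ChiProp} (which gives the sub-Gaussian MGF bound $\EE[e^{\mu\chi}] \le e^{\mu \EE\chi + \mu^2/2}$ valid for parameter $\ge 1$), and to bridge the two applications with a Gaussian linearization trick to handle the quadratic term that will appear in between.

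First I will condition on $\widetilde\chi = y$ and apply Lemma \ref{ChiProp} with $\mu = \lambda y$, yielding
$$\EE[e^{\lambda y \chi}] \le e^{\lambda y \EE\chi + \lambda^2 y^2/2}.$$
Multiplying through by $e^{-\lambda \EE\chi \EE\widetilde\chi}$ and then taking the outer expectation over $\widetilde\chi$ reduces the desired bound to controlling
$$\EE\bigl[ \exp\bigl(\lambda \EE\chi \,(\widetilde\chi - \EE\widetilde\chi) + \tfrac{\lambda^2}{2}\, \widetilde\chi^{\,2}\bigr) \bigr].$$

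The main obstacle is the presence of $\widetilde\chi^{\,2}$ in the exponent, which blocks a direct second use of Lemma \ref{ChiProp}. The key idea I will use is to introduce a standard Gaussian $Z$ independent of $\widetilde\chi$ and exploit the identity $e^{\lambda^2 \widetilde\chi^{\,2}/2} = \EE_Z [ e^{\lambda \widetilde\chi Z}]$. By Fubini, the inner expectation in $\widetilde\chi$ becomes $\EE [ e^{(\lambda \EE\chi + \lambda Z)\widetilde\chi}]$, which is linear in $\widetilde\chi$ and therefore falls in the scope of Lemma \ref{ChiProp} (applied with $\mu = \lambda \EE\chi + \lambda Z$, conditionally on $Z$). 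This produces a bound of the form $e^{(\lambda\EE\chi+\lambda Z)\EE\widetilde\chi + (\lambda\EE\chi+\lambda Z)^2/2}$, after which only a Gaussian integral in $Z$ remains.

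The last step evaluates that Gaussian integral using the standard identity $\EE[e^{aZ + bZ^2/2}] = (1-b)^{-1/2} \exp(a^2/(2(1-b)))$, valid for $b < 1$. Here $b = \lambda^2$, which is precisely where the hypothesis $|\lambda| < 1$ enters and where the prefactor $(1-\lambda^2)^{-1/2}$ in the stated bound originates. After collecting exponents and a short algebraic simplification --- in which the $\lambda^2 (\EE\chi)^2$ contributions from the two successive applications of Lemma \ref{ChiProp} combine cleanly with the cross-term $\lambda^2 \EE\chi \EE\widetilde\chi \cdot Z$ from the Gaussian linearization --- the resulting exponent collapses exactly to $\tfrac{\lambda^2}{2(1-\lambda^2)}[(\EE\chi)^2 + (\EE\widetilde\chi)^2 + 2\lambda \EE\chi \EE\widetilde\chi]$, as required.
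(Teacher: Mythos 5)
Your proof is correct and takes essentially the same route as the paper: two applications of Lemma \ref{ChiProp} bridged by the Gaussian linearization identity $e^{\lambda^2 t^2/2} = \EE_Z[e^{\lambda t Z}]$, followed by the Gaussian quadratic-exponential integral. The only cosmetic difference is that you peel off $\chi$ first and close with $\widetilde\chi$, whereas the paper does the reverse; by symmetry this is immaterial.
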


\begin{proof} For $|\lambda | < 1$, using inequality (\ref{chitail}) in the ${\widetilde \chi} $ variable,
$$  \EE [e^{\lambda \chi  {\widetilde \chi}} ]
         \leq \EE \big[ e^{\lambda \EE [ {\widetilde \chi}] \chi  + \lambda ^2 \chi ^2/2} \big] 
         = \int_{-\infty}^{\infty} \EE \big[ e^{ \lambda [\EE( {\widetilde \chi}) + s ] \chi  } \big] d \gamma (s)  $$
where $\gamma $ is the standard normal distribution on $\RR$. Now, for every $s$, with
(\ref{chitail}) in the $\chi $ variable,
$$  \EE \big[ e^{ \lambda (\EE ({\widetilde \chi}) + s ) \chi  } \big]
     \leq  e^{  \lambda (\EE [ {\widetilde \chi}] + s ) \EE [\chi] 
             + \lambda ^2 [\EE[{\widetilde \chi}] + s ]^2 / 2} .$$
The result follows by integration over $s$.
\end{proof}

What this means for the present application is that
\begin{equation}
\label{Zbound}
  \EE [ e^{\lambda Z_k } ], \, \,  \EE [ e^{\lambda \widetilde{Z}_k } ] 
     \le e^{2 \lambda^2} \mbox{   for all real } \lambda  \le \sqrt{\beta \kappa }/4,
\end{equation}
and
\begin{equation}
\label{Ybound}
  \EE[ e^{\lambda Y_k} ] \le 2 e^{12 \lambda^2} \mbox{   for all real } \lambda \mbox{   with   }
   |\lambda| \le \sqrt{\beta \kappa}/2\sqrt{2}.
\end{equation}
To proceed, we split $L_b(v)$ into three pieces now, isolating each of the noise 
components,  and focus  on
the bound for $\sup_{||v||_2 =1} L_{b/3}(v, Z)$ (the notation indicating (\ref{Lb}) with only the $Z$ noise term present).
One must take some care when arriving at the analog of (\ref{bound3}).  In obtaining an inequality of the
form $\PP( \Delta_m(J,Z) > t ) \le C e^{-t^2/C}$ we must be able to apply (\ref{Zbound})  (and (\ref{Ybound}) when considering 
the $Y$ noise term) with $\lambda = O(t/m)$.  But, examining (\ref{bd1}) and (\ref{bd2}) shows we only need consider $t$'s
of order $\sqrt{\beta} ( \sqrt{n} + \sqrt{\kappa} \eps) m$. Thus we easily get by via
\begin{eqnarray}
\label{bound4L}
\PP \Bigl( \sup_{||v||_2 =1}  L_b( v) \ge  \sqrt{\kappa} \eps \Bigr)  
& \le & C \, \PP \Bigl( \sup_{||v||_2 =1}  L_{b/3}( v, Z) \ge   \sqrt{\kappa} \eps /C \Bigr)   \\
& \le &  \Bigl( \frac{C}{1- e^{-\beta \eps \sqrt{\frac{\kappa}{n}}  m^2/C} } \Bigl)  e^{- \beta \kappa  m  \eps^2/C}  +  \Bigl(  \frac{C}{ 1- e^{-\beta / C}}  \Bigr)
e^{- \beta \sqrt{\kappa n}  \eps / Cm},  \nonumber
\end{eqnarray}
with $C = C(b)$, compare (\ref{bound4}).
Setting $m$ to be the nearest integer to $\frac{1}{\sqrt{\eps}} \left( \frac{n}{\kappa} \right)^{1/4}$ puts both exponential
factors on the same footing, namely on the order of $e^{- \beta \kappa^{3/4} n^{1/4} \eps^{3/2}/C}$, and
removes  all $\eps, \kappa,$ and $n$ dependence on the first prefactor.  Certainly the best decay possible,
but requires $\eps \le \sqrt{n/\kappa}$.   Otherwise, if $\eps \ge  \sqrt{n/\kappa}$, we simply choose 
$m=1$ in which case the second term of (\ref{bound4L}) is the larger and produces decay $e^{-\beta \sqrt{n\kappa} 
\eps/C}$.   Happily, both estimates agree at the common value $\eps =  \sqrt{n/\kappa}$.

\paragraph{Left-Tail.} It is enough to produce the bound for 
$\PP( L_a(v,Z) \le - C \sqrt{\kappa} \eps ||v||_2^2)$ for large $a$, given $v \in \RR^n$ and a $C = C(a)$ as in the Hermite
case.  Indeed,  (\ref{Zbound}) and (\ref{Ybound}) show that $L_a(v, \widetilde{Z})$ and $L_a(v, Y)$ will follow suit. 

We have the estimate
\begin{eqnarray}
\label{lagleft}
\lefteqn{
\PP \Bigl( L_a(v, Z) \le - C \sqrt{\kappa} \eps || v||_2^2 \Bigr) } \\
& = & \PP \Bigl( \sum_{k=1}^n (-Z_k) v_k^2 \ge \sqrt{\beta} \left[ C \sqrt{\kappa} \eps || v||_2^2 - a \sqrt{n} || \nabla v||_2^2 - (a/\sqrt{n}) || \sqrt{k} v ||_2^2 \right] \Bigr)  \nonumber \\
& \le & \exp\bigg ( - \beta  \, \frac{ [ C \sqrt{\kappa} \eps || v||_2^2 - a \sqrt{n} || \nabla v||_2^2 - (a/\sqrt{n}) || \sqrt{k} v ||_2^2]^2 }{ 8||v||_4^4} \bigg ).
\nonumber
\end{eqnarray}
Here we have introduced the shorthand
\begin{equation}
  || v||_4^4 = \sum_{k=1}^n v_k^4, \   || \nabla v ||_2^2 = \sum_{k=0}^n (v_{k+1} - v_{k})^2,  \   || \sqrt{k} v ||_2^2 = \sum_{k=1}^n k v_k^2, 
\label{shorthand}
\end{equation}
and  have also used the fact that (\ref{Zbound}) applies just as well to $-Z_k$.  In fact, the sign precludes any concern over the required
choice of $\lambda$. For the $Y$-noise term, care must be taken on this point, but one may check that all is fine given our selection of $v$
below.

For the small deviation regime, we use a slight modification of the Hermite test vector (\ref{choice1}), and set 
$$
v_k =   \Big ( \frac{\delta}{n \eps} k \Big) 
     \wedge \Big (1 - \frac{\delta}{n \eps} k \Big )  \mbox{    with    } \delta = (n/\kappa)^{1/2} 
$$
for $k \le n \eps/\delta$ and $v_k = 0$ otherwise. This requires $\eps \le \delta = (n/\kappa)^{1/2}$ in order to be sensible,
and produces the same appraisals for $|| v||_2^2, ||v||_4^4, ||\nabla||_2^2$, and $|| \sqrt{k} v||_2^2$
 as in (\ref{appraise}), with each appearance of $\eps$ 
replaced by $\eps/\delta$.  Substitution into (\ref{lagleft}) yields 
$$
 \PP \Bigl( L_a(v, Z) \le - C \sqrt{\kappa} \eps || v||_2^2 \Bigr) 
     \le \exp \bigg ( - (\beta/8) \kappa^{3/2} n^{1/2} \eps^3 
   \Bigl[C - O(1 \vee \frac{1}{\eps^3\kappa^{3/2} n^{1/2}} ) \Bigr]^2  \bigg).
$$
For $\eps > \sqrt{n/\kappa}$, notice that  the particularly simple choice of a constant $v$ gives
$$
   \PP \Bigl( L_a(1, Z) \le - C \sqrt{\kappa} \eps ||1||_2^2 \Bigr) 
   \le \exp \Bigl( - (\beta/8) \kappa  n \eps^2 [C - (2a/n) - a]^2 \Bigr).
$$  
Combined, these two bounds cover the claimed result, provided that $\kappa^{3/2} n^{1/2} \eps^3$
is chosen larger than one
in the former.  Extending this to the full range of $\eps$ and all remaining considerations are
the same as in the Hermite setting.

\subsection{Variances}

We provide details for  $\lambda_{\max}(H_{\beta})$, the Laguerre case is quite the same. (Neither
is difficult.)  Write
$$ 
   {\rm Var} \left[ \lambda_{\max}(H_{\beta})  \right]  
   \le   n \int_0^{\infty}  \PP \left( |\lambda_{\max}(H_{\beta}) - 2 \sqrt{n} | \ge  \sqrt{n} \eps \right)  d \eps^2, 
$$
and then split the integrand in two according whether  
$\lambda_{\max} \le 2 \sqrt{n}$  or $\lambda_{\max} > 2 \sqrt{n}$. 

First note that our upper bound on the probability that $\lambda_{\max}(H_{\beta}) - 2 \sqrt{n}  \le   -\sqrt{n} \eps$ 
applies to any $\eps = O(1)$.  Further, from the  tridiagonal model we see that $\lambda_{\max} $ 
stochastically dominates $(1/\sqrt{\beta}) \max_{1 \le k \le n}  g_k$. Hence, for $\delta < 0$ we have 
the cheap estimate
$\PP( \lambda_{\max}(H_{\beta}) \le -\delta \sqrt{n} ) \le e^{-\beta n^2 \delta^2}$, and 
thus
$$
  \PP \left( \lambda_{\max}(H_{\beta}) - 2 \sqrt{n}  \le   - \sqrt{n}  \eps \right) \le 
  C_{\beta} \, e^{-\beta n^2  ( \eps^{3}  \wedge  \eps^2 ) /C} 
$$
for all $\eps > 0$. This easily  produces
$$
 \left(   \int_0^{2}  +\int_2^{\infty} \right)  \PP \left( \lambda_{\max}(H_{\beta}) - 2 \sqrt{n}  \le -  \sqrt{n} \eps \right)  d \eps^2   \le C_{\beta} \,  n^{-4/3}.
$$ 

For the other range, recall that we mentioned at the end of proof for the right-tail upper bound that
the advertised estimate is easily extended to the large deviation regime (cf. (\ref{largedev})) to read
$$
     \PP \left( \lambda_{\max}(H_{\beta}) - 2 \sqrt{n}  \ge   \sqrt{n}  \eps \right) 
      \le C_{\beta}\,  e^{-\beta n ( \eps^{3/2} \vee \eps^2) /C}.  
$$
This results in
$$
  \left(  \int_0^2 + \int_2^{\infty}   \right)   \PP \left( \lambda_{\max}(H_{\beta}) - 2 \sqrt{n}  \ge \sqrt{n} \eps \right) d \eps^2   \le  C_{\beta} \,  n^{-4/3}
$$
and completes the proof.

\section{(Hermite) Lower Bounds}

\paragraph{Right-Tail.}  This follows from another appropriate choice of test vector $v$. 
To get started, write
\begin{eqnarray}
\label{Hlower}
 \PP \Bigl( \sup_{||v||_2 = 1}  H(v)  \ge  \sqrt n \eps \, \Bigr) 
      & \geq &
 \PP \Bigl( H_a(v) \geq   \sqrt n  \eps \,  ||v||_2^2  \Bigr)  \\
   &  \geq  &  \PP \Bigl( H_a (v, g) \geq  2  \sqrt{n} \eps  \,  ||v||_2^2  \Bigr) \, 
                \PP \Bigl( \chi  (v) <   \sqrt n \eps \, ||v||_2^2  \Bigr).  \nonumber
\end{eqnarray}
Here, as before, $\chi(v) = (2/\sqrt{\beta}) \sum_{k=1}^{n-1} ( \chi_{\beta(n-k)} -  \EE[ \chi_{\beta(n-k)}]) v_k v_{k+1}$.

Our choice of $v$ is arrived at by examining the first factor above: with, as in the left-tail upper bound, 
a standard Gaussian $\mathfrak{g}$,
\begin{eqnarray*}
\lefteqn{
   \PP \Bigl( H_a (v, g) \geq  2 \varepsilon  \sqrt n \,  ||v||_2^2  \Bigr) } \\
   & = & \PP \bigg(  \Big ( \frac{2}{\beta} \sum_{k=1}^n v_k^2 \Big)^{1/2} \mathfrak{g} 
        \ge 2 \sqrt{n} \eps \sum_{k=1}^n v_k^2 + a \sqrt{n} \sum_{k=0}^n (v_{k+1} - v_k)^2 
            + \frac{a}{\sqrt{n}} \sum_{k=1}^n k v_k^2 \bigg).
\end{eqnarray*}
 Now the intuition
is that the eigenvalue (of a discretized $-d^2/dx^2 +$ potential) is being forced large positive, so the potential
should localize with the eigenvector following suit.  

Let then
$$
   v_k = \sqrt{\eps} k \wedge \big ( 1- \sqrt{\eps} k \big )  \  \ \mbox {   for   } k \le \eps^{-1/2}
    \mbox{   and otherwise  } 0,
$$
where we will assume that $n \ge \eps^{-3/2} \ge \eps^{-1/2}$.  With these choices 
we have 
$$
   || v ||_2^2 \sim || v||_4^4 \sim \frac{1}{\sqrt{\eps}}, \  || \nabla v ||_2^2 \sim \sqrt{\eps}, \  || \sqrt{k} v ||_2^2 \sim \frac{1}{\eps},
$$   
(recall the notation from (\ref{shorthand}))
and thus the existence of a constant $C = C(a)$ so that
$$
   \PP \Bigl( H_a (v, g) \geq  2 \varepsilon  \sqrt n \,  ||v||_2^2  \Bigr) \ge \frac{1}{C} \, e^{-C \beta n \eps^{3/2}}.
$$
Similarly, returning to the second factor on the right hand side of (\ref{Hlower}) and  invoking the estimate 
(\ref{chibound}) we also have 
$$
  \PP \Bigl(  \chi(v) \ge  \sqrt{n} \eps ||v||_2^2 \Bigr) \le e^{ - \beta n \eps^{3/2} /C}
$$
for the same choice of $v$.  And granted $n \eps^{3/2} \ge 1$, it follows
that $ \PP (  \chi(v) <  \sqrt{n} \eps ||v||_2^2) \ge  1- e^{ - 1/C}$
throughout this regime. That is, 
$$
    \PP \Bigl( \sup_{||v||_2 = 1}  H(v)  \ge  \sqrt n \eps \, \Bigr)  
    \ge \frac{1}{C} \, e^{-C \beta n \eps^{3/2}} \mbox{   whenever   } n \eps^{3/2} \ge 1.
$$
When $n \eps^{3/2} \le 1$, write
$$
   \PP \Bigl( \sup_{||v||_2 = 1}  H(v)  \ge  \sqrt n \eps \, \Bigr) \ge  \PP \Bigl( \sup_{||v||_2 = 1}  H(v)  \ge  \sqrt n \eps_0 \, \Bigr) \ge  \frac{1}{C e^{\beta C}} \ge \frac{1}{C e^{\beta C}} \,
   e^{- C \beta n \eps^{3/2}}, 
$$
where $\eps_0 = n^{-2/3} \le 1$ to produce the advertised form of the bound for all $n$ and $\eps$.

\medskip

\paragraph{Left-Tail.}  This relies heavily on the right-tail upper bound.  
The first step is to reduce to a Gaussian setting via independence: for whatever $b> 0$,
$$
  \PP \Bigl( \sup_{||v||_2 = 1}  H_{2b}(v) \le - \sqrt{n} \eps \Bigr) 
  \ge   \PP \Bigl( \sup_{||v||_2 = 1}  {H}_{b}(v, g) \le - 2 \sqrt{n} \eps \Bigr) \, 
   \PP \Bigl( \sup_{||v||_2 = 1}  \widetilde{H}_{b}(v, \chi) \le  \sqrt{n} \eps \Bigr).
$$
Here we also use the notation of the proof of Theorem \ref{Thm-Hermite} (right-tail), from which we know that
$$
    \PP \Bigl( \sup_{||v||_2 = 1}  \widetilde{H}_{b}(v, \chi) \ge  \sqrt{n} \eps \Bigr) \le C e^{-n \eps^{3/2} /C}.
$$
(As $\beta \ge 1$ we are simply dropping it from the exponent on the right at this stage.)
Hence, if as we regularly have start with an assumption like $n \eps^{3/2} \ge C^2 \ge 1$, it follows that
$$
    \PP \Bigl( \sup_{||v||_2 = 1}  H_{2b}(v) \le - \sqrt{n} \eps \Bigr) \ge (1 -e^{-1})  \PP \Bigl( \sup_{||v||_2 = 1}  {H}_{b}(v, g) \le - 2 \sqrt{n} \eps \Bigr). 
$$ 
Turning  to $H_b(v, g)$ we make yet another decomposition of the noise term.  Let $L$ be an integer ($1\le L\le n$) to be specified.  Set
$S_L = \frac{1}{L} \sum_{k=1}^L g_k$, and
$$  
   \eta_k = g_k - \frac{1}{L} S_L \mbox{   for   } 1\le k \le L, \, \eta_k = g_k \mbox{   for   }    L < k \le n.
$$ 
Note that the family $\{ \eta_k \}_{k=1, \dots, n}$ is independent of $S_L$.  If the procedure of Proposition \ref{Prop-IntUp} could be applied
to $H_b(v, \eta)$, we would have an event of probability larger than $1-C e^{-n\eps^{3/2} /C}$ (again we simply drop the beta dependence
at this intermediate stage)
on which
\begin{equation}
\label{etabound}
   \frac{1}{\sqrt{\beta}} \sum_{k=1}^n \eta_k v_k^2 - b \sqrt{n} \sum_{k=1}^n (v_{k+1}-v_k)^2 - \frac{b}{2\sqrt{n}} \sum_{k=1}^n k v_k^2 \le \sqrt{n} \eps \sum_{k=1}^n v_k^2. 
\end{equation}
Since we are still working under the condition $n \eps^{3/2} \ge C^2$, this is to say that there is an event of probability a least $1- 1/e$, 
depending only of the $\eta_k$'s, and  on which
$$
  H_b(v, g) \le \frac{1}{\sqrt{\beta}} \,   S_L \sum_{k=1}^L v_k^2 - \frac{b}{2 \sqrt{n}} \sum_{k=1}^n k v_k^2 + \sqrt{n}{\eps} \sum_{k=1}^n v_k^2,
$$
for every $v \in \RR^n$.  If we now choose $L+1\ge 6 n \eps/b$, we  have further
$$
    H_b(v, g) \le \frac{1}{\sqrt{\beta}} S_L \sum_{k=1}^L v_k^2 + \sqrt{n} \, \eps \sum_{k=1}^L  v_k^2 - 2 \sqrt{n} \, {\eps} \sum_{k=L+1}^n v_k^2
$$
on that same event.  Note this choice requires $\eps \le b/6$;  it is here that the range of valid epsilon gets cut down in our final statement.
In any case, putting the last remarks together we have proved that
$$
  \PP \Bigl( \sup_{||v||_2 = 1}  {H}_{b}(v, g) \le - 2 \sqrt{n} \, \eps \Bigr) \ge (1 - e^{-1}) 
  \PP \left( S_L \le - 3  \sqrt{n \beta} \, \eps \right)
$$
and so also
$$  
    \PP \Bigl( \sup_{||v||_2 = 1}  {H_{2b}}(v) \le - \sqrt{n} \, \eps \Bigr)   \ge \frac{1}{C} \, e^{- C \beta n^2 \eps^3},
$$
again
under the constrains $n \eps^{3/2} \ge C^2$ and $\eps \le b/6$.  The last inequality follows as $S_L$ is a mean-zero Gaussian with variance of order $(n\eps)^{-1}$.

The range $n \eps^{3/2} \le C^2$ is handled as before, 
$$
   \PP \Bigl( \sup_{||v||_2 = 1 } H_{2b}(v) \le - \sqrt{n} \, \eps \Bigr)    \ge    \PP \Bigl(   \sup_{||v||_2 = 1 } H_{2b}(v) \le - \sqrt{n} \, \eps_0 \Bigr)   \ge \frac{1}{Ce^{\beta C^5}} 
       \ge  \frac{1}{Ce^{\beta C^5}} \, e^{-C \beta n^2 \eps^3},     
$$
where $\eps_0 = (C^2/n)^{2/3}$.  As $\eps_0$ must lie under $b/6$, this last selection requires $n \ge (6/b)^{3/2} C^2$, 
but smaller values of $n$ can now be covered by adjusting the constant.

It remains to go back and verify that
$\PP ( \sup_{||v||_2 = 1} H_b(v, \eta) \ge \sqrt{n} \eps ) \le Ce^{-n \eps^{3/2}/C}$. 
The only reason that Proposition \ref{Prop-IntUp} cannot be followed verbatim is that the $\eta_k$'s
are not independent, the first $L$ of them being tied together through $S_L$.  We need the appropriate Gaussian tail inequality for 
the variables
$$
  \triangle_m(k, \eta) = \max_{k <  \ell \le k+m}  \bigg |  \sum_{j=k}^{\ell} \eta_j \bigg |,
$$
and, comparing with (\ref{bound3}), shows that an estimate of type $\PP (  \triangle_m(k, \eta) > t) \le C e^{-t^2/Cm}$   suffices.  But
$$
     \sum_{j=k}^{\ell} \eta_j  =  \sum_{j=k}^{\ell} g_j  + (\ell \wedge L - k \wedge L) S_L,
$$
and so
$$
 \PP \big ( \triangle_m(k, \eta) > t \big ) \le \PP \big ( \triangle_m(k, g) > t/2 \big ) + \PP (m S_L > t/2 ). 
$$
The first term we have already seen to be of the required order, and the second is less than $e^{- L t^2/8m^2}$.  
Since we  only apply this 
bound in the present setting when $L = Cn \eps \ge C \eps^{-1/2}$ and  $m = [\eps^{-1/2}]$
(the choice made in Proposition \ref{Prop-IntUp}), we have that
  $\PP (m S_L > t/2 ) \le  e^{-t^2/Cm}$,  and the proof is complete.

\section{Minimal Laguerre Eigenvalue}

While not detailed there, the results of \cite{RRV} will imply that
\begin{equation}
\label{lambdamin}
   \frac{ (\sqrt{\kappa n})^{1/3}}{ (\sqrt{\kappa} - \sqrt{n})^{4/3}} 
    \Bigl( ( \sqrt{\kappa} - \sqrt{n})^2 - \lambda_{\min}(L_{\beta}) \Bigr) \Rightarrow TW_{\beta},
\end{equation}
whenever $\kappa, n \rightarrow \infty, \kappa/n \rightarrow c > 1$.  This appraisal was long understood
for the minimal eigenvalue of L$\{$O/U$\}$E, and has recently been extended to non-Gaussian 
versions of those ensembles in \cite{FS}.
The condition $\kappa/n \rightarrow c > 1$ keeps the limiting spectral density supported
away from the origin, resulting in the same soft-edge behavior that one has for $\lambda_{\max}$.
If instead
$\kappa - n $ remains fixed in the limit, one has a different scaling and different limit law(s) for $\lambda_{\min}$,
the so-called hard-edge distributions.  Granted the existence of the ``hard-to-soft transition"
for all $\beta >0$  (see \cite{BF} and 
\cite{RR}) it is believed that (\ref{lambdamin}) holds as long as
$\kappa - n \rightarrow \infty$, but (to the best of our
knowledge)  this has not been explicitly worked out in any setting.

We only consider the  analogue of the right-tail upper bound for $\lambda_{\min}$
and have  the following.

\begin{theorem} 
\label{lastthm}
 Let $\beta \ge 1$ and  $\kappa \ge n + {1}$. Then,
\begin{equation}
\label{minbound}
 \PP \Bigl( \lambda_{min} (L_{\beta})  \le  ( \sqrt{\kappa} - \sqrt{n})^2 (1-\eps) \Bigr) 
  \le C e^{-\beta (\kappa n)^{1/4} (  \sqrt{\kappa} - \sqrt{n}) \eps^{3/2} / C},
\end{equation}
for a numerical constant $C$ and
all $0 < \eps \le   \sqrt{\frac{n}{\kappa}} ( \alpha^{14} \wedge   \alpha^{2}n^{-2/5} )$ 
in which $\alpha = 1 - \sqrt{n/\kappa}$.
\end{theorem}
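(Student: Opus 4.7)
The plan mirrors the right-tail upper bound for $\lambda_{\max}(L_\beta)$, adapted to the left edge via a sign-flipping change of variable. Starting from Rayleigh--Ritz,
\begin{equation*}
\bigl\{\lambda_{\min}(L_\beta) \le (\sqrt\kappa - \sqrt n)^2(1-\eps)\bigr\} = \Bigl\{\sup_{\|v\|_2=1}\bigl[(\sqrt\kappa-\sqrt n)^2\|v\|_2^2 - v^T L_\beta v\bigr] \ge (\sqrt\kappa - \sqrt n)^2 \eps\Bigr\},
\end{equation*}
one writes $v^T L_\beta v = \|B_\beta^T v\|_2^2 = \sum_{k=1}^{n-1}(a_k v_k + c_{k+1} v_{k+1})^2 + a_n^2 v_n^2$ via the bidiagonal model, with $a_k = \chi_{\beta(\kappa-k+1)}/\sqrt\beta$ and $c_k = \widetilde\chi_{\beta(n-k+1)}/\sqrt\beta$. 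The change $v_k = (-1)^k w_k$ flips the off-diagonal sign to yield $\sum (a_k w_k - c_{k+1} w_{k+1})^2 + a_n^2 w_n^2$, whose leading mean value is $(\sqrt\kappa - \sqrt n)^2 \|w\|_2^2$. This puts the problem into the same soft-edge form as the right-tail Laguerre analysis, but centered at the \emph{left} spectral endpoint.

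The central step is to derive the analogue of Lemma \ref{Lc}. Replacing the $\chi$'s by their means ($\tilde a_k = \sqrt{\kappa-k+1}$, $\tilde c_{k+1} = \sqrt{n-k}$), expanding
\begin{equation*}
(\tilde a_k w_k - \tilde c_{k+1} w_{k+1})^2 = (\tilde a_k - \tilde c_{k+1})^2 w_k^2 - 2(\tilde a_k - \tilde c_{k+1})\tilde c_{k+1} w_k \delta_k + \tilde c_{k+1}^2 \delta_k^2
\end{equation*}
with $\delta_k = w_{k+1} - w_k$, and then applying $2w_k \delta_k = w_{k+1}^2 - w_k^2 - \delta_k^2$ followed by summation by parts, I expect to arrive at the estimate
\begin{equation*}
(\sqrt\kappa - \sqrt n)^2 \|w\|_2^2 - \sum_{k=1}^{n-1}(\tilde a_k w_k - \tilde c_{k+1} w_{k+1})^2 - \tilde a_n^2 w_n^2 \;\le\; - a \sqrt{\kappa n}\sum_{k=0}^n \delta_k^2 - \frac{a(\sqrt\kappa - \sqrt n)^2}{\sqrt{\kappa n}}\sum_{k=1}^n k\, w_k^2
\end{equation*}
for some absolute $a > 0$. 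The gradient coefficient $\sqrt{\kappa n}$ emerges as the product $\tilde a_k \tilde c_{k+1}$ after combining $\tilde c_{k+1}^2 \delta_k^2$ with half of the cross term, while the potential slope $(\sqrt\kappa-\sqrt n)^2/\sqrt{\kappa n}$ comes from the Taylor expansion $(\sqrt\kappa-\sqrt n)^2 - (\tilde a_k - \tilde c_{k+1})^2 \approx -k(\sqrt\kappa-\sqrt n)^2/\sqrt{\kappa n}$, valid for $k \ll \min(\kappa,n)$. Adding the centered noise built from $Z_k, \widetilde Z_k, Y_k$ (as in Lemma \ref{Lc}, with a possibly flipped sign on $Y_k$ that is harmless by the symmetry of Lemma \ref{ProdProp} in $\lambda$) produces a model quadratic form of exactly the shape handled by Proposition \ref{Prop-IntUp}.

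The final step is to apply the Chernoff--Doob integration-by-parts scheme of Proposition \ref{Prop-IntUp} with the gradient coefficient $\sigma = \sqrt{\kappa n}$ in place of $b\sqrt n$ and the potential slope $\tau = (\sqrt\kappa-\sqrt n)^2/\sqrt{\kappa n}$ in place of $b/\sqrt n$. Taking the Cauchy--Schwarz parameter equal to $\sigma$ and optimizing the block size $m$ so as to balance the two resulting exponential bounds produces the target exponent $\beta(\kappa n)^{1/4}(\sqrt\kappa-\sqrt n)\eps^{3/2}/C$; the three noise types are controlled via Lemma \ref{ChiProp}, the $\chi^2$-moment-generating estimate preceding Lemma \ref{ProdProp}, and Lemma \ref{ProdProp} itself, exactly as in the Laguerre right-tail argument. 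The main obstacle is the quantitative comparison lemma: the Taylor expansions work only for $k$ well away from the hard-edge regime, and the right-boundary term $\tilde a_n^2 w_n^2 \sim \kappa\alpha^2 w_n^2$ (with $\alpha = 1-\sqrt{n/\kappa}$) must be absorbed into the potential slope, which is possible only if the effective support of $w$ lies within $k \ll n\alpha^2$. Together with the Tracy--Widom rescaling constraint $(\sqrt\kappa-\sqrt n)^{1/3}(\kappa n)^{1/6}\eps \le 1$ and the subgaussianity window $|\lambda| \le O(\sqrt{\beta\kappa})$ of the centered noise, these restrictions yield the intricate range $0 < \eps \le \sqrt{n/\kappa}(\alpha^{14}\wedge \alpha^2 n^{-2/5})$ stated in Theorem \ref{lastthm}.
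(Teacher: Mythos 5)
Your high-level geometry (sign flip $v_k=(-1)^kw_k$, Taylor expansion of the deterministic bidiagonal means producing an energy term of coefficient $\sim\sqrt{\kappa n}$ and a linear potential of slope $\sim(\sqrt\kappa-\sqrt n)^2/\sqrt{\kappa n}$) is consistent with the paper's reduction to the form $L'(v)$ in (5.4). But there is a critical gap in how you propose to treat the noise, and it makes the final exponent come out wrong.

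You plan to handle the three noise pieces $Z_k,\widetilde Z_k,Y_k$ separately via a three-way split, as in the right-tail $\lambda_{\max}(L_\beta)$ argument. Each of these variables is subgaussian with constant $O(1)$, uniformly in $k$. But the drift in the minimal-eigenvalue problem is of size $\alpha^2/\sqrt n$ (with $\alpha=1-\sqrt{n/\kappa}$), so running Proposition \ref{Prop-IntUp} with $O(1)$-variance noise against an $\alpha^2$-scale drift and target deviation $\alpha^2\sqrt\kappa\,\eps$ yields, after optimizing $m$, an exponent of order $\beta\,n^{1/4}\kappa^{3/4}\alpha^{3}\eps^{3/2}$ — off from the stated $\beta(\kappa n)^{1/4}(\sqrt\kappa-\sqrt n)\eps^{3/2}=\beta\,n^{1/4}\kappa^{3/4}\alpha\,\eps^{3/2}$ by a factor of $\alpha^2$ in the exponent. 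The missing idea is the recombination of the noise: for slowly varying $v$ (which is precisely the regime relevant at the left edge) the three pieces cancel, $-Z_k-\widetilde Z_k+2Y_k=-U_k$ where $U_k=\frac{1}{\sqrt{\beta\kappa}}\bigl[(\chi_{\beta(\kappa-k+1)}-\widetilde\chi_{\beta(n-k)})^2-\EE[\cdot]\bigr]$, and $\mathrm{Var}(U_k)=O(\alpha^2)$ for moderate $k$. The paper exploits exactly this in the rewriting just before (5.8): the noise is decomposed as $\sum(-U_k)v_k^2+\sum(-\widetilde Z_k)(v_k^2-v_{k-1}^2)+\sum(-Y_k)v_k(v_{k+1}+v_k)$, and the last two terms are error terms (they carry discrete-gradient factors which are $o(1)$ for the nearly-constant optimal $v$). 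A naive union bound over $Z,\widetilde Z,Y$ cannot see this cancellation and loses the $\alpha^2$.

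Two further consequences of this that your sketch does not address: first, the subgaussian tail for $U_k$ with variance parameter $\sigma_k^2=O(\alpha^2)$ is not an elementary Jensen/Chernoff computation like Lemmas \ref{ChiProp}--\ref{ProdProp} — the paper imports the Aida--Masuda--Shigekawa concentration inequality for $F^2-\EE F^2$ under a log-Sobolev inequality, applied to the Lipschitz function $F(x,y)=x-y$ on the product $\chi$-law. Second, the residual $\widetilde Z,Y$ terms still have $O(1)$ variance and must be absorbed as lower-order errors (the paper's Step 2, via the $\max_k W_k^2/(\beta a_k)$ device); this absorption, together with the degeneration of the energy coefficient $\lambda_k$ near $k=n$ (which forces a $k$-dependent Cauchy--Schwarz parameter and the split $j\le n/2m$ versus $j>n/2m$ in the block sum), is precisely where the awkward constraints $\eps\le\alpha^{20/3},\,\alpha^{8/3}n^{-2/5},\,\alpha^{44/3}$ originate. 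Your sketch attributes the range to the subgaussianity window $|\lambda|\lesssim\sqrt{\beta\kappa}$ and the Tracy--Widom rescaling, but neither of those is the binding constraint in the paper's proof.
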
 

According to (\ref{lambdamin}), the deviations are of the order of 
$(\sqrt {\kappa n})^{1/3} (\sqrt \kappa  - \sqrt n)^{2/3} \eps$, which
explains the exponent in (\ref {minbound}).
Our condition on $\eps$ is certainly not very satisfactory, although
still sensible to the fluctuations in (\ref{lambdamin}). One would hope for
the  range of $\eps$ to be 
understandable in terms of the soft/hard edge picture $-$ what we have here arises
from technicalities.   On the other hand, if we 
place an additional, ``soft-edge" type, restriction on $\kappa$ and $n$, we 
obtain a more natural looking estimate.

\begin{corollary}
\label{Easy}
  Again take $\beta \ge 1$, but now assume that $\kappa > c n$ for $c >1$.
The right hand side of (\ref{minbound}) may then be replaced by
$C e^{-\beta n \eps^{3/2}/C}$ for a $C = C(c)$, with the resulting bound valid for all $0 < \eps \le 1$.
\end{corollary}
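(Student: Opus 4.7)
Under $\kappa > cn$ with $c > 1$ fixed, set $\alpha = 1 - \sqrt{n/\kappa} \ge 1 - c^{-1/2} =: \alpha_0$, a positive constant depending only on $c$. A direct computation shows
\[
(\kappa n)^{1/4}(\sqrt{\kappa}-\sqrt{n}) \;=\; \kappa^{1/4} n^{1/4} \cdot \sqrt{n}\,(\sqrt{\kappa/n}-1) \;\ge\; c^{1/4}(\sqrt{c}-1)\, n,
\]
so the exponent in \eqref{minbound} already dominates $\beta n \eps^{3/2}/C$ after absorbing $c$-dependent constants into $C = C(c)$. My plan is therefore to inherit the rate from Theorem \ref{lastthm} and focus on extending the range of valid $\eps$ from the restricted window up to all of $(0,1]$.

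The range restriction $\eps \le \sqrt{n/\kappa}\,(\alpha^{14} \wedge \alpha^2 n^{-2/5})$ is where the work lies. With $\alpha \ge \alpha_0 > 0$, the wedge is attained at $\alpha^2 n^{-2/5}$ for all large $n$, so Theorem \ref{lastthm} as stated yields the corollary only for $\eps \le C_1 n^{-2/5}$ with $C_1 = C_1(c)$. To cover $\eps \in (C_1 n^{-2/5}, 1]$, I would revisit the proof of Theorem \ref{lastthm} under the stronger hypothesis $\kappa > cn$: the $n^{-2/5}$ factor is flagged by the authors as a technicality, and I expect it is the vestige of handling the hard-to-soft transition regime where $\alpha$ may become arbitrarily small (forcing delicate choices of perturbation / test vector near the hard edge). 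Since $\alpha$ is bounded below by $\alpha_0$ in our setting, the corresponding step in the integration-by-parts / quadratic-form argument should succeed for $\eps$ up to a constant $c_0 = c_0(c) > 0$; re-running the proof with the uniform lower bound on $\alpha$ and the soft-edge scale $\sqrt{\kappa} - \sqrt{n} \ge (\sqrt{c}-1)\sqrt{n}$ built in throughout should yield the bound for all $\eps \in (0, c_0(c)]$. This is the main obstacle, since I do not see the proof of Theorem \ref{lastthm} in the excerpt; the hope is that the simpler setting makes this almost a book-keeping exercise rather than a new idea.

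Granted the bound at $\eps = c_0(c)$, the remaining range $\eps \in (c_0(c), 1]$ follows by monotonicity of the event in $\eps$:
\[
\PP\bigl(\lambda_{\min}(L_\beta) \le (\sqrt{\kappa}-\sqrt{n})^2(1-\eps)\bigr) \;\le\; \PP\bigl(\lambda_{\min}(L_\beta) \le (\sqrt{\kappa}-\sqrt{n})^2(1-c_0(c))\bigr) \;\le\; C \, e^{-\beta n c_0(c)^{3/2}/C}.
\]
Since $\eps \le 1$ gives $\eps^{3/2} \le 1$, the last expression is bounded by $C \, e^{-\beta n \eps^{3/2}/C'}$ with $C' = C/c_0(c)^{3/2}$, which is still $C = C(c)$, completing the proof.
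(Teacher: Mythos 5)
The central step of your argument --- that re-running Theorem \ref{lastthm}'s proof with the uniform lower bound $\alpha \ge \alpha_0(c)$ will push the valid range of $\eps$ from $O(n^{-2/5})$ up to a constant $c_0(c)$ --- is asserted but not carried out. You flag this yourself as ``the main obstacle,'' and it is in fact the whole of the proof. The rest of what you supply is sound: the comparison of exponents $(\kappa n)^{1/4}(\sqrt\kappa-\sqrt n) \ge c^{1/4}(\sqrt c - 1)\,n$ is correct, the observation that monotonicity alone cannot reach $\eps = O(1)$ directly from the $n^{-2/5}$ window is the right diagnosis, and the final monotonicity step (once a constant range is secured) is valid. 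But none of that closes the gap; it only shows that the corollary reduces to a claim you do not verify.

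The paper does not re-run or relax Theorem \ref{lastthm}'s book-keeping at all. It goes back to the simplified quadratic form $L'(v)$ from (\ref{L'}) and makes a structural observation: under $\kappa > cn$, both $\alpha$ and the energy coefficient
$\lambda_k = \frac{1}{\beta\sqrt\kappa}\,\EE\bigl[\chi_{\beta(\kappa-k+1)}\widetilde\chi_{\beta(n-k)}\bigr]$
are well behaved --- $\alpha$ is bounded away from $0$ and $\lambda_k$ is bounded below by a constant multiple of $\sqrt{n-k}$. This lets one dominate the deterministic part of $L'(v)$ from above by a small negative multiple of the \emph{constant-coefficient} form
$\sqrt n\sum_{k=1}^{n-1}(v_{k+1}+v_k)^2 + n^{-1/2}\sum_{k=1}^n k\,v_k^2$.
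That substitution eliminates the variable-coefficient energy term, which is precisely the source of the $\alpha$-dependent restrictions in Theorem \ref{lastthm}. The argument then becomes verbatim the right-tail upper bound for $\lambda_{\max}(L_\beta)$, i.e.\ the Proposition \ref{Prop-IntUp} machinery with $m = [\eps^{-1/2}]$, already valid for all $0 < \eps \le 1$, so no separate range-extension or monotonicity step is needed. This is the content you would have to supply to turn your outline into a proof; it is cleaner than trying to trace which of the $\eps$-constraints in the proof of Theorem \ref{lastthm} become vacuous.
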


The last statement should be compared with Corollary V.2.1(b) of  \cite{FS}, which applies
to classes of non-Gaussian matrices.

As to the proof, we proceed in a by now familiar way. We first set
$$
   \sqrt{\kappa}  L(v) = v^T \Bigl(  (\sqrt{\kappa} - \sqrt{n})^2 - L_{\beta}  \Bigr) v.
$$
Then, after a rescaling of $\eps$,  we will prove the equivalent  
$$
\PP \Bigl( \sup_{||v|| = 1} L(v)  \ge  \alpha^{4/3} \sqrt{n} \eps \Bigr) \le C e^{-\beta n \eps^{3/2} / C}
\mbox{   for   } \eps \le   \min ( \alpha^{44/3} , \alpha^{8/3}n^{-2/5} ).
$$ 
Similar to the strategy employed above, a series of algebraic manipulations
shows that we can work instead with the simplified quadratic form
\begin{eqnarray}
\label{L'}
   L'(v)  & = &   \frac{1}{\sqrt{\beta}} \sum_{k=1}^n (-Z_k) v_k^2   + \frac{1}{\sqrt{\beta}} \sum_{k=2}^n (- {\widetilde Z}_k) v_k^2  
                            + \frac{2}{\sqrt{\beta}} \sum_{k=1}^{n-1} (- Y_k ) v_k v_{k+1}   \\
              &  & -   \sum_{k=1}^{n-1}   {1 \over \beta  \sqrt{ \kappa} } \,
                     \EE [ \chi _{\beta(\kappa -k+1) } {\widetilde \chi} _{ \beta(n-k)} ] (v_{k+1} + v_k)^2  
                 - { \alpha ^2\over \sqrt n} \sum_{k=1}^n k v_k^2 . \nonumber
\end{eqnarray}
(The condition $\kappa \ge n + {1}$ in Theorem \ref{lastthm} is used in passing from $L$ to $L'$.)  

We remark that
under the added condition $\kappa > c n$  for  $c >1$,  $\alpha$ is bounded uniformly from below and 
${1 \over \beta \sqrt{ \kappa} } \, \EE [ \chi _{\beta(\kappa-k+1)} {\widetilde \chi} _{\beta(n -k)} ] $ is bounded
below by a constant multiple of  $ \sqrt{n-k}$.
Hence, the deterministic part of $L'$ is bounded above by a small negative multiple  of
$  \sqrt{n} \sum_{k=1}^{n-1}  (v_{k+1} + v_k)^2 +   \frac{1}{\sqrt{n}} \sum_{k=1}^n k v_k^2$.      The proof of Corollary  \ref{Easy}         
is then identical to that of the right-tail upper bound for $\lambda_{\max}(L_{\beta})$.

Back to Theorem \ref{lastthm} and $\alpha$'s unbounded from below, we begin by rewriting the noise term in $L'$ as
$\frac{1}{\sqrt{\beta}}$ times
\begin{eqnarray*}
\lefteqn{  \sum_{k=1}^n  (-Z_k)  v_k^2   + \sum_{k=2}^n (- {\widetilde Z}_k) v_k^2  
                            +2 \sum_{k=1}^{n-1}  (-Y_k) v_k v_{k+1} }  \\
              &= & \sum_{k=1}^n (-  U_k ) v_k^2 
                      + \sum_{k=2}^n (- {\widetilde Z}_k) ( v_k^2 - v_{k-1}^2)
                   + \sum_{k=1}^{n-1} (-Y_k)v_k (v_{k+1} + v_k),  
\end{eqnarray*}
in which
$$ U_k = {1 \over \sqrt{\beta \kappa} } \Bigl[  ( \chi _{\beta(\kappa-k+1)} - {\widetilde \chi}_{\beta(n -k)} )^2
     - \EE \bigl[ ( \chi _{\beta(\kappa-k+1)} - {\widetilde \chi}_{\beta(n -k)} )^2 \bigr] \Bigr] ,
    \quad k = 1, \ldots , n ,
$$      
(with the convention that ${\widetilde \chi}_0 = 0$).          
The idea is the following.  For moderate $k$, $\mbox{Var}(U_k) = O(\alpha^2)$, and thus it is this
contribution to the noise which balances the drift term $\frac{\alpha^2}{\sqrt{n}} \sum_{k=1}^n k v_k^2$.
Also, one may check that in the continuum limit the optimal $v$ is such that $|v_k + v_{k+1}| = o(1)$, and so the $\widetilde{Z}$ and $Y$
terms should ``wash out".

We complete the argument in two steps.  In step one, we simply drop the $\widetilde{Z}$ and $Y$ terms and apply the
method in Proposition 7 to the further simplified form
\begin{equation}
\label{LU}
 L(v,U) = \frac{1}{\sqrt{\beta}} \sum_{k=1}^n (- {U}_k) v_k^2  
                          -   \sum_{k=1}^{n-1}   
                      \frac{\EE [ \chi _{\beta(\kappa -k+1) } {\widetilde \chi} _{ \beta(n-k)} ] }{ \beta \sqrt{\kappa}} (v_{k+1} + v_k)^2  
                 - { \alpha ^2\over \sqrt n} \sum_{k=1}^n k v_k^2. 
\end{equation}
Even here we loose a fair bit in our estimates (resulting in non-optimal on $\eps$) due 
to the variable coefficient in the energy term.
Step two shows that, under yet additional restrictions on $\eps$, the $\widetilde{Z}$ and $Y$ noise
terms may be absorbed into $L(v, U)$.

\medskip

{\em Step 1.}  We wish to prove $\PP( \sup_{||v||=1} L(v, U) \ge \alpha^{4/3} \sqrt{n} \eps) \le C e^{- \beta n \eps^{3/2}/C}$ for some
range of $\eps > 0$. (The optimal range being $0 < \eps  \le (\kappa/n)^{1/2} \alpha^{2/3}.)$  A first ingredient is a tail bound  on the $U_k$ variables, for which we first bring in the following.

\begin{lemma}(Aida, Masuda, Shigekawa \cite{AMS})
Given a measure $\eta$ on the line which satisfies a logarithmic Sobolev inequality with constant $C > 0$,
there is the estimate
$$
   \int e^{\lambda (F^2 - \EE [F^2])} d \eta \le 2 \, e^{8 C \lambda^2 \EE[ F]^2}
    \mbox{   whenever   }   | \lambda | \le \frac{1}{16 C},
$$
for any  $1$-Lipschitz function $F$.
\end{lemma}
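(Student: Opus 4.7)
The plan is to establish this bound via the Herbst-style argument, adapted to handle the fact that $F^2$ is not itself Lipschitz but has gradient controlled by $2|F|$. The heart of the matter is turning the log-Sobolev inequality into a differential inequality for the moment generating function of $F^2$.

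First, fix $\lambda > 0$ and set $Z(\lambda) = \int e^{\lambda F^2} d\eta$. Apply the LSI in its standard form $\text{Ent}_\eta(g^2) \le 2 C \int |\nabla g|^2 d\eta$ to the function $g = e^{\lambda F^2/2}$. Since $F$ is $1$-Lipschitz we have $|\nabla g|^2 \le \lambda^2 F^2 g^2$ almost everywhere, so the LSI becomes
$$ \lambda Z'(\lambda) - Z(\lambda) \log Z(\lambda) \;\le\; 2C\lambda^2 Z'(\lambda),$$
where the left-hand side is the entropy written out (using $\int F^2 e^{\lambda F^2}d\eta = Z'(\lambda)$).

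Second, provided $\lambda < 1/(2C)$ we may divide and rearrange to obtain $Z'(\lambda)/Z(\lambda) \le \log Z(\lambda) / (\lambda(1-2C\lambda))$. Setting $H(\lambda) = \lambda^{-1}\log Z(\lambda)$ and computing $\lambda H'(\lambda) = Z'(\lambda)/Z(\lambda) - H(\lambda)$, this translates into the clean differential inequality
$$ H'(\lambda) \;\le\; \frac{2C}{1-2C\lambda}\, H(\lambda).$$
Integrating from $0^+$, where $H(0^+) = \EE[F^2]$ by a Taylor expansion of $Z$, yields $\log Z(\lambda) \le \lambda \EE[F^2]/(1-2C\lambda)$, and subtracting $\lambda \EE[F^2]$ then gives $\int e^{\lambda(F^2-\EE F^2)}d\eta \le \exp(2C\lambda^2 \EE[F^2]/(1-2C\lambda))$.

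Third, to replace $\EE[F^2]$ by $\EE[F]^2$ as required, recall that LSI implies a Poincar\'e inequality with the same constant $C$, so $\text{Var}_\eta(F) \le C\|F\|_{\text{Lip}}^2 \le C$ for any $1$-Lipschitz $F$. The sharp version of the bound is obtained by running the Herbst scheme on the shifted function $F - \EE F$ and then carefully tracking how the shift re-enters when we return to $F^2$: the cross terms are estimated by Cauchy–Schwarz against the already-controlled centered moment generating function, which turns the $\EE[F^2]$ in the numerator into $\EE[F]^2$ plus a Poincar\'e-controlled remainder. Restricting to $|\lambda|\le 1/(16C)$ makes $1-2C\lambda \ge 7/8$, so the coefficient tidies to $8C\lambda^2 \EE[F]^2$, and the leftover multiplicative constant from the remainder is absorbed into the prefactor $2$. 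For $\lambda < 0$ the integrand is bounded by $1$ and the same symmetric argument applies.

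The main obstacle is precisely this last step: the naive differential-inequality approach delivers $\EE[F^2]$ in the exponent, which is the quantitatively wrong moment, and the factor $\EE[F]^2$ (which vanishes when $F$ is centered) can only be extracted through a more careful centering argument of the kind I sketched above. Everything else is standard Herbst, but verifying that the centering trick gives exactly $8C$ with the stated range $|\lambda|\le 1/(16C)$ is where the bookkeeping is delicate.
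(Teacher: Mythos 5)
The paper does not prove this lemma at all: it is cited verbatim from Aida, Masuda, and Shigekawa \cite{AMS} and used as a black box, so there is no argument in the text to compare yours against. Your plan (Herbst differential inequality for $F^2$, then a centering step to extract $\EE[F]^2$) is the right strategy, and your Steps 1--2 are a correct derivation of $\log Z(\lambda)\le \lambda\,\EE[F^2]/(1-2C\lambda)$.

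Two points in Step 3 need repair, however. First, the sentence ``for $\lambda<0$ the integrand is bounded by $1$ and the same symmetric argument applies'' is incorrect as stated: the integrand $e^{\lambda(F^2-\EE F^2)}$ is \emph{not} $\le 1$ for $\lambda<0$ (only $e^{\lambda F^2}\le 1$ is), and the crude bound it yields, namely $\EE\, e^{\lambda(F^2-\EE F^2)}\le e^{|\lambda|\,\EE F^2}$, does \emph{not} imply the claim when $\EE F\ne 0$, since $e^{|\lambda|(\EE F)^2}\le e^{8C\lambda^2(\EE F)^2}$ would require $|\lambda|\ge 1/(8C)$, outside the range. The negative-$\lambda$ case genuinely requires the same decomposition as the positive case. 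Second, the ``centering trick'' is only gestured at; to make it a proof, one should write $G=F-\EE F$, so that $F^2-\EE F^2 = (G^2-\EE G^2)+2(\EE F)G$, and apply Cauchy--Schwarz:
$$
\EE\, e^{\lambda(F^2-\EE F^2)}\le \bigl(\EE\, e^{2\lambda(G^2-\EE G^2)}\bigr)^{1/2}\bigl(\EE\, e^{4\lambda(\EE F)G}\bigr)^{1/2}.
$$
The second factor is handled by the standard Herbst bound for the centered $1$-Lipschitz $G$: $\EE\, e^{sG}\le e^{Cs^2/2}$ for all real $s$, giving $e^{4C\lambda^2(\EE F)^2}$ after the square root. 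The first factor uses $\EE G^2=\mathrm{Var}(F)\le C$ (Poincar\'e from LSI) together with your Step 1--2 bound when $\lambda>0$, or simply $e^{2\lambda G^2}\le 1$ when $\lambda<0$; in either case, for $|\lambda|\le 1/(16C)$ it is at most $e^{1/16}<2$. Put together this yields $\EE\, e^{\lambda(F^2-\EE F^2)}\le e^{1/16}e^{4C\lambda^2(\EE F)^2}\le 2e^{8C\lambda^2(\EE F)^2}$ for both signs of $\lambda$, so the stated constants do hold --- but you should make this chain explicit rather than leave it at the level of a remark that ``the bookkeeping is delicate.''
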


As a consequence, we have that:

\begin{corollary}
Let $\chi$ and $\widetilde{\chi}$ be independent $\chi$ random variables (each of parameter
larger than one) and set $U = (\chi - \widetilde{\chi})^2$ and $\sigma = \EE [ \chi - \widetilde{\chi}]$.
There exists a numerical constant $C>0$ such that 
$$
   \EE [ e^{\lambda ( U - \EE U)} ] \le C e^{C \sigma^2 \lambda^2  }
$$
for all real $\lambda \in (-1/C, 1/C)$.
\end{corollary}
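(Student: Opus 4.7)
The plan is to deduce the bound from the AMS lemma stated above, applied to a suitable $1$-Lipschitz function on the product space $\RR_+^2$ equipped with the law of $(\chi, \widetilde{\chi})$.  The AMS inequality extends verbatim to higher-dimensional measures satisfying LSI, since its proof rests on LSI rather than on dimensionality.

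First I would verify LSI for the underlying measure.  A chi variable of parameter $r > 1$ has density $p_r(x) = c_r x^{r-1} e^{-x^2/2}$ on $[0, \infty)$, and the potential $V_r = -\log p_r$ has second derivative $(r-1)/x^2 + 1 \ge 1$.  Hence the law $\mu_r$ is strictly log-concave with curvature bounded below by $1$, and by Bakry-Emery it satisfies an LSI with a numerical constant $c_0$ independent of $r > 1$.  Tensorizing, the product law of $(\chi, \widetilde{\chi})$ inherits LSI with the same constant $c_0$.  For integer $r$ one could bypass Bakry-Emery entirely by realizing $\chi_r = \|X\|$ for $X$ a standard Gaussian vector in $\RR^r$ and invoking LSI for the Gaussian on $\RR^{r + \widetilde{r}}$.

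Next I take $F(x, y) = (x-y)/\sqrt{2}$ on $\RR_+^2$.  The Euclidean gradient is $(1, -1)/\sqrt{2}$, of norm $1$, so $F$ is $1$-Lipschitz, and a direct computation gives $F^2 = U/2$ and $\EE F = \sigma/\sqrt{2}$, hence $(\EE F)^2 = \sigma^2/2$.  Applying the AMS lemma with parameter $2\lambda$ in place of $\lambda$, and using $2\lambda(F^2 - \EE F^2) = \lambda(U - \EE U)$, then yields
$$ \EE \big[ e^{\lambda(U - \EE U)} \big] \le 2 \exp\! \big( 8 c_0 (2\lambda)^2 \sigma^2 / 2 \big) = 2 \exp\! \big( 16 c_0 \lambda^2 \sigma^2 \big), $$
valid whenever $|2\lambda| \le 1/(16 c_0)$, i.e.\ $|\lambda| \le 1/(32 c_0)$.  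Taking $C \ge \max(2, 16 c_0, 32 c_0)$ delivers the stated bound.

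No step is genuinely hard; the main point requiring care is the uniformity in $r > 1$ of the LSI constant $c_0$, which is precisely why the hypothesis ``parameter larger than one'' is imposed --- for $r \le 1$ the term $(r-1)/x^2$ changes sign, log-concavity of $p_r$ fails, and the Bakry-Emery estimate degrades.  A secondary, cosmetic issue is the boundary of $[0, \infty)$ in applying Bakry-Emery, but since $p_r$ vanishes at the origin for $r > 1$ this causes no real trouble.
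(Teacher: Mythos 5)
Your proof is correct and follows essentially the same route as the paper: apply the Aida--Masuda--Shigekawa lemma to the law of the pair $(\chi, \widetilde{\chi})$, which satisfies a log-Sobolev inequality with constant uniform in the parameters (the paper simply cites Thm.\ 5.2 of \cite{L0} for this, whereas you supply a Bakry--\'Emery argument via $V_r'' = (r-1)/x^2 + 1 \ge 1$ plus tensorization). Your one refinement is normalizing to $F(x,y) = (x-y)/\sqrt{2}$ so that $F$ is genuinely $1$-Lipschitz before invoking the lemma, which tidies up a small constant the paper glosses over; the substance is the same.
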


Indeed, by the general theory  (see Thm. 5.2 of \cite{L0} for example) the
distribution of the pair $( \chi, \widetilde{\chi})$ on $\RR_+ \times \RR_+$ satisfies a logarithmic 
Sobolev inequality.  The lemma then applies with  $F(x,y) = x-y$.  In our setting, we record
this bound as
$$
\label{varvar}
   \EE [ e^{\lambda U_k} ] \le C e^{ C  \sigma_k^2  \lambda^2}  \mbox{   for   }
    |\lambda| < \sqrt{\beta \kappa}/{C} \mbox{    and    } \sigma_k^2 = \EE[ U_k]^2.
$$

Picking up the thread of Proposition 7, the variable coefficient in the energy term of $L(v, U)$ is dealt with by applying the Cauchy-Schwarz argument
with $\lambda = \lambda_k$ defined by
\begin{equation}
\label{varlam}
  \lambda_k =  \frac{\EE [ \chi _{\beta(\kappa -k+1) } {\widetilde \chi} _{ \beta(n-k)} ] }{ \beta \sqrt{\kappa}}, \   k = 1, \dots, n-1,
\end{equation}
compare (3.3).  Schematically, we are left to bound 
\begin{equation}
\label{lastsum}
   \sum_{j=1}^{[n/m]} \PP \Bigl(  \frac{1}{m \sqrt{\beta}} \Delta_{m} (jm, U)  \vee \frac{1}{\lambda_{jm} \beta} \Delta_{m} (jm, U)^2  \ge \frac{\alpha^2}{\sqrt{n}} jm + \eps \alpha^{4/3} \sqrt{n} \Bigr)
\end{equation}
for our choice of integer $m$.  The $\Delta_m(\cdot, U)$ notation stands in analogy to that used in Section 3.  Note we have taken the liberty 
to drop various constants and shifts of indices in the above display (which are irrelevant to the upshot).

Here the dependence of $\sigma_k$ and $\lambda_k$  on the relationship between $n$ and $\kappa$ comes into play. While at the
top of the form everything works as anticipated, these quantities behave unfavorably for $k$ near $n$.
For this reason we deal with the sum (\ref{lastsum}) by dividing the range into $j \le n/2m$ and $j > n/2m$ with
the help of the appraisals:
\begin{equation}
\label{siglam}
  \sigma_k^2 \le  \left\{ \begin{array}{ll} C \alpha^2, &  1 \le k \le n/2, \\ C, & n/2 < k \le n. \end{array} \right.  \   \   \
  \lambda_k \ge \left\{ \begin{array}{ll} \sqrt{n}/C, &  1 \le k \le n/2, \\  \frac{\alpha}{C} \sqrt{n-k}, & n/2  < k < n. \end{array} \right.
\end{equation}   

Restricted to $j \le n/2m$ (and hence substituting $\sigma^2_{jm}  = C \alpha^2$, $\lambda_{jm} = \sqrt{n}/C$), 
the sum (\ref{lastsum}) can be 
bounded by $C e^{- \beta n \eps^{3/2}/C}$ upon choosing $m = [ \eps^{-1/2} \alpha^{-2/3}]$.
This  holds for all values of $\eps$ so long as the choice of $m$ is sensible,
requiring that $\eps \ge \alpha^{-4/3} n^{-2}$. But this is ensured if $\kappa \ge n+1$ and $\eps^{3/2} n \ge 1$ 
(the former having been built into the hypotheses and the latter we may always assume). 

On the range $j \ge n/2m$ the $\eps$ term on the right hand side within the probabilities is of no help, 
and we use, along with $ \sigma_{jm}^2 \le C$ and $\lambda_{jm} \le \alpha \sqrt{n-jm}/C$,  the crude estimates
\begin{eqnarray*}
  \sum_{n/2m \le j < n/m} \PP \Big ( \Delta_m(jm, U) \ge \sqrt{\beta}  \alpha^2 m^2 \frac{j}{\sqrt{n}} \Big )
  &  \le &  C \sum_{j \ge n/2m} e^{-\beta m^3 \alpha^4 j^2/ C n} \\
  & \le  & {C}{m^{-2} \alpha^{-4}} \, e^{-\beta m \alpha^4 n/C},
\end{eqnarray*}
and
\begin{eqnarray*}
 \sum_{n/2m \le j < n/m} \PP \Big ( \Delta_m(jm, U)^2 \ge  
 {\beta} \alpha^2  m \lambda_{jm}  \frac{j}{\sqrt{n}} \Big)
 & \le & C \sum_{1 \le j \le n/2m}  e^{ - \beta \alpha^3 (n/m)^{1/2} j^{1/2} /C}  \\
 &  \le  &  C \big ( 1 +  ( m/ n {\alpha}^{6} ) \big )  \,  e^{-\beta \alpha^3 (n/m)^{1/2}/C}.
\end{eqnarray*}
The choice of $m = [ \eps^{-1/2} \alpha^{-2/3}]$ being fixed, we can bound each of the above by the 
desired 
$C e^{-\beta n \eps^{3/2}/C}$ only by restricting 
$\eps$ to be sufficiently small.  The first estimate requires $\eps \le \alpha^{20/3}$, the second requires
in addition that $\eps \le \alpha^{8/3}n^{-2/5}$ (and again uses $n \eps^{3/2} \ge 1$).

In summary 
\begin{equation}
\label{almost}
\PP \Big( \sup_{||v||=1} L(v, U) \ge \alpha^{4/3} \sqrt{n} \eps \Big) 
\le C e^{- \beta n \eps^{3/2}/C}  \, \, \mbox{   if   } \, \,  0 < \eps \le \min( \alpha^{20/3} , \alpha^{8/3}n^{-2/5}).
\end{equation}
It is perhaps worth mentioning here that the bounds on $\lambda_k$ and $\sigma_k^2$ for the range $k \ge n/2$ introduced in (\ref{siglam})
may be improved slightly, though not apparently with great effect on the final result.

\medskip

{\em Step 2.}  To absorb the $\widetilde{Z}, Y$ noise terms, we show that
$
  L'(v) \le \tilde{L}(v,U) + \mathcal{E}(\widetilde{Z}, Y, v)
$
with a new form $\tilde{L}(v, U)$ comparable to $L(v,U)$, and an ``error" term $\mathcal{E}$ for
which we have $\PP(\mathcal{E} \ge  \alpha^{4/3} \sqrt{n} \eps ) \le C e^{- \beta n \eps^{3/2}/C}$, at
least for some range of $\eps>0$.  What follows could almost certainly be improved upon.

Define, for $k = 1, \dots, n-1$:
$$
     a_k = \frac{1}{4} \lambda_k \mbox{   for   } k \le \alpha^4n , \  \ 
      a_k = \frac{1}{16} \frac{ \alpha^2 k }{\sqrt{n}}  \mbox{   for   }  k \ge \alpha^4 n.
$$
(Recall the definition of $\lambda_k$ from (\ref{varlam}).)
Then,
an application of the Cauchy-Schwarz inequality yields: for all $v$ of length one,
\begin{eqnarray*}
  \frac{1}{\sqrt{\beta}} \sum_{k=2}^n (-\widetilde{Z}_k) ( v_k^2 - v_{k-1}^2) \le
      {1  \over 4}  \sum_{k=1}^{n-1} 
                    \lambda_k (v_{k+1}  + v_k)^2                 
      +     {\alpha ^2 \over 4 \sqrt n}  \sum_{k=1}^n k v_k^2      
            +  \max _{1 \leq k \leq n-1}  {\widetilde{Z}_{k+1}^2 \over {\beta a_k}} 
\end{eqnarray*}
A similar estimate applies to $\sum_{k=1}^{n-1} Y_k v_k (v_{k+1} - v_k)$.  Accordingly,
$$
 L'(v) \le  \tilde{L}(v,U)  +  \max _{1 \leq k \leq n-1}  {\widetilde{Z}_{k+1}^2 \over {\beta a_k}}  +  \max _{1 \leq k \leq n-1}  {{Y}_{k}^2 \over {\beta a_k}} 
$$ 
with
$$
    \tilde{L}(v,U) =  \frac{1}{\sqrt{\beta}} \sum_{k=2}^n (- {U}_k) v_k^2  
                          -    \frac{1}{2} \sum_{k=1}^{n-1}   
                      \lambda_k (v_{k+1} + v_k)^2  
                 - { \alpha ^2\over {2 \sqrt n}} \sum_{k=1}^n k v_k^2. 
$$
Obviously, the arguments of step 1 apply to $\tilde{L}(v,U)$.

Finally, with $W_k$ either $\widetilde{Z}_{k+1}$ or $Y_k$, Lemmas 10 and 11 imply that
$$
  \PP \left( \max _{1 \leq k \leq n-1}  { W_k^2 \over {\beta a_k}}   \ge \eps \alpha^{4/3} \sqrt{n} \right) \le C  \sum_{k=1}^n e^{ -\beta \eps \alpha^{4/3} a_k /C},
$$ 
provided say $\eps \le 1$.  Since it may be assumed that $\alpha < 1/2$ (otherwise we are in the easy regime covered by Corollary 13), we 
have the bound $a_k = \lambda_k \le \sqrt{n}/C$ for $k \le \alpha^4 n \le n/2$ and so also
$$
 \sum_{k=1}^n e^{ - \varepsilon \alpha ^{4/3} \sqrt n a_k /C }
              \leq  \alpha ^4 n \, e^{- \beta \varepsilon \alpha ^{4/3} n  /C }
                  + { C \over \varepsilon \alpha ^{10/3}} \,  e^{ - \beta \varepsilon \alpha ^{22/3} n /C}, 
$$
by considering the sums over $k \le \alpha^4 n $ and $k > \alpha^4 n$ separately. If now $\eps \le \alpha^{44/3}$ 
(still keeping in mind that $\eps^{3/2} n \ge 1$), the right hand side is less than $C e^{-\beta n \eps^{3/2} /C}$.
Adding this new constraint on $\eps$ to those stated in (\ref{almost}) completes the proof.

\bigskip

\noindent{\bf{Acknowledgments }} 
The work of the first author was supported in part by
the French ANR GRANDMA, that of the second author by 
NSF grant DMS-0645756.  The second author also thanks the
Institut de Math\'ematiques de Toulouse, during a visit to which much of the present work 
was completed, for their hospitality.

\sc \noindent
Michel Ledoux \\ 
Institut de Math\'ematiques de Toulouse, \\
Universit\'e de Toulouse, F-31062 Toulouse, France.
\\ {\tt ledoux@math.univ-toulouse.fr}

\sc \bigskip \noindent Brian Rider \\  Department of
Mathematics,
\\ University of Colorado at Boulder, Boulder, CO 80309. \\{\tt
brian.rider@colorado.edu}

\end{document}